\newtheorem{theorem}{Theorem}[section]
\newtheorem{lemma}[theorem]{Lemma}
\newtheorem{corollary}[theorem]{Corollary}
\newtheorem{proposition}[theorem]{Proposition}
\theoremstyle{definition}
\newtheorem{definition}[theorem]{Definition}
\newtheorem{example}[theorem]{Example}
\theoremstyle{remark}
\newtheorem{remark}[theorem]{Remark}
\numberwithin{equation}{section}
\begin{document}

\title[Some invariants of $U(1,1;\mathbb{H})$ and diagonalization]{Some invariants of $U(1,1;\mathbb{H})$ and diagonalization}

\author{Cailing Yao}
\address{Cailing Yao, School of Mathematics, Jilin University, 130012, Changchun, P. R. China}
\email{1290279144@qq.com}

\author{Bingzhe Hou$^*$}
\address{Bingzhe Hou, School of Mathematics, Jilin University, 130012, Changchun, P. R. China}
\email{houbz@jlu.edu.cn}
	
\author{Xiaoqi Feng}
\address{Xiaoqi Feng, School of Mathematics, Jilin University, 130012, Changchun, P. R. China}
\email{fengxiaoqi2011@qq.com}
\date{}
\thanks{*Corresponding author.}
\subjclass[2010]{Primary 15A20, 15B33, 30G35; Secondary 15A18, 16R30.}
\keywords{$U(1,1;\mathbb{H})$; Trace; quaternionic M\"{o}bius transformations; spectra; diagonalization.}
\begin{abstract}
Denote by $\mathbb{H}$ the set of all quaternions. We are interested in the group $U(1,1;\mathbb{H})$, which is a subgroup of $2\times 2$ quaternionic matrix group and is sometimes called $Sp(1,1)$. As well known, $U(1,1;\mathbb{H})$ corresponds to the quaternionic M\"{o}bius  transformations on the unit ball in $\mathbb{H}$. In this article, some similar invariants on $U(1,1;\mathbb{H})$ are discussed. Our main result shows that each matrix $T\in U(1,1;\mathbb{H})$, which corresponds to an elliptic quaternionic M\"{o}bius  transformation $g_T(z)$, could be $U(1,1;\mathbb{H})$-similar to a diagonal matrix.
\end{abstract}
\maketitle

\section{Introduction}
Denote by $\mathbb{H}$ the set of all quaternions. Each element of $\mathbb{H}$ has the form $z=z_{0}+\mathbf{i}z_{1}+\mathbf{j}z_{2}+\mathbf{k}z_{3}$ where $z_{i}\in\mathbb{R}$ and $\mathbf{i}^{2}=\mathbf{j}^{2}=\mathbf{k}^{2}=\mathbf{ijk}=-1$. Denote by $Re(z)=z_{0}$ and $Im(z)=\mathbf{i}z_{1}+\mathbf{j}z_{2}+\mathbf{k}z_{3}$ the real and imaginary part of $z$, respectively. Moreover, let the quaternionic conjugate of $z$ be $\overline{z}=z_{0}-z_{1}\mathbf{i}-z_{2}\mathbf{j}-z_{3}\mathbf{k}$, and let the modulus of $z$ be
$\mid z\mid=\sqrt{\overline{z}z}=\sqrt{z_{0}^{2}+z_{1}^{2}+z_{2}^{2}+z_{3}^{2}}$. The family of quaternions plays a role in quantum physics \cite{Adl94}, \cite{Fin62} and \cite{Fin79}. As analogues of complex matrices, quaternionic matrices have gained much attention, see \cite{Wolf36}, \cite{Lee49}, \cite{JLB51} and a good survey by Zhang \cite{FZZ1997}.

Denote by $M_{n}(\mathbb{H})$ the set of all $n\times n$ matrices of quaternions. Moreover, denote by
$GL_{n}(\mathbb{H})$ the set of all invertible matrices in $M_{n}(\mathbb{H})$, and denote by
$Sp_{n}(\mathbb{H})$ the set of all quaternionic unitary matrices in $M_{n}(\mathbb{H})$, i.e.,
\[
Sp_{n}(\mathbb{H})\triangleq\{U\in M_{n}(\mathbb{H}); ~U^*U=UU^*=\mathbf{I}_n\},
\]
where $U^*$ is the quaternionic adjoint matrix of $U$ and $\mathbf{I}_n$ is the $n\times n$ identity matrix. Let $\mathfrak{G}$ be a subgroup of $M_{n}(\mathbb{H})$ equipped with matrix multiplication. For two matrices $A, B\in M_{n}(\mathbb{H})$, we call that $A$ is $\mathfrak{G}$-similar to $B$ if there exists a matrix $G\in\mathfrak{G}$ such that $G^{-1}AG=B$. The problem of similar classification is a key problem in matrix theory. The usual similarity is the $GL_{n}(\mathbb{H})$-similarity. Wolf \cite{Wolf36} studied similarity of quaternionic matrices as early as 1936, and then Wiegmann obtained an analogue of Jordan canonical decomposition theorem for quaternionic matrices. In particular, two quaternions $z$ and $\omega$ are similar, i.e., there exists a nonzero $q\in $ $\mathbb{H}$ such that $z=q\omega q^{-1}$ if and only if $\mid z\mid=\mid \omega\mid$ and $z_{0}=\omega_{0}$ (see \cite{JLB51}). The $Sp_{n}(\mathbb{H})$-similarity is often said to be the quaternionic unitary equivalence, which was studied in \cite{DB2008}.

Of course, similar invariants are important in the study of similar classification. For $A=[a_{ij}]\in M_{n}(\mathbb{H})$, the trace $Tr$ of $A$ introduced in \cite{DB2008} is,
$$
Tr(A):=\sum_{i=1}^{n}(a_{ii}+\overline{a_{ii}})=2Re(\sum_{i=1}^{n}a_{ii}).
$$
One should notice the distinction between the usual matrix trace and the $Tr$ defined above. In fact, it is not difficult to see that the $Tr$ is a similar invariant while the usual matrix trace is not (\cite{FZZ1997}). Some identities on $Tr$ are also shown in \cite{DB2008}.

Eigenvalues or spectra is a classical concept in the study of matrix theory. Let $V$ be a two-side quaternionic vector space. A map $T:V\rightarrow V$ is said to be a right linear operator if for any $\mathbf{u},\mathbf{v}\in V$ and any $s\in\mathbb{H}$
\[
T(\mathbf{u}+\mathbf{v})=T(\mathbf{u})+T(\mathbf{v}),\ \ \ \ T(\mathbf{u}s)=T(\mathbf{u})s.
\]
Similarly, a map $T:V\rightarrow V$ is said to be a left linear operator if
\begin{center}
$T(\mathbf{u}+\mathbf{v})=T(\textbf{u})+T(\mathbf{v}),\ \ T(s\mathbf{u})=sT(\mathbf{u})$,
\end{center}
for all $s\in$ $\mathbb{H}$, $\mathbf{u},\mathbf{v}\in V$. We use $\mathcal{B}^{R}(V)$ ($\mathcal{B}^{L}(V)$) to denote the set of all right (left) linear bounded operators on the two-side vector space $V$.  In this paper, we consider the right linear bounded operators only, because the study on $\mathcal{B}^{R}(V)$ is the same as the study on $\mathcal{B}^{L}(V)$.

\begin{definition}\label{RS}
Let $T:V\rightarrow V$ be a right linear quaternionic operator on $V$. The set of all right eigenvalues of $T$, denoted by $\sigma_{Rp}(T)$, is defined by
$$
\sigma_{Rp}(T)=\{s\in \mathbb{H}; \text{there exists a nonzero vector}~\mathbf{v}~\text{such that}~ T\mathbf{v}-\mathbf{v}s=\mathbf{0}\}.
$$
The right spectrum of $T$, denoted by $\sigma_{R}(T)$, is defined by
$$
\sigma_{R}(T)=\{s\in \mathbb{H}; T-\mathbf{I}s ~\text{is not invertible}\},
$$
where $\mathbf{I}$ denotes the identity operator on $V$, and the notation $\mathbf{I}s$ means that $(\mathbf{I}s)(\mathbf{v})=\mathbf{v}s$ for any $\mathbf{v}\in V$.
\end{definition}

\begin{definition}\label{LS}
Let $T:V\rightarrow V$ be a left linear quaternionic operator on $V$. The set of all left eigenvalues of $T$, denoted by $\sigma_{Lp}(T)$, is defined by
$$
\sigma_{Lp}(T)=\{s\in \mathbb{H}; \text{there exists a nonzero vector}~\mathbf{v}~\text{such that}~ T\mathbf{v}-s\mathbf{v}=\mathbf{0}\}.
$$
The left spectrum of $T$, denoted by $\sigma_{L}(T)$, is defined by
$$
\sigma_{L}(T)=\{s\in \mathbb{H}; T-s\mathbf{I} ~\text{is not invertible}\},
$$
where the notation $s\mathbf{I}$ means that $(s\mathbf{I})(\mathbf{v})=s\mathbf{v}$ for any $\mathbf{v}\in V$.
\end{definition}

In the study of noncommutative functional calculus, the $S$-spectrum for a right linear quaternionic operator was introduced, we refer to \cite{NFC}.
\begin{definition}\label{SS}
Let $T\in $ $\mathcal{B}^{R}(V)$. The $S$-spectrum of $T$, denoted by $\sigma_{S}(T)$, is defined by
$$
\sigma_{S}(T)=\{ s\in \mathbb{H}; T^{2}-2s_{0}T+\mid s\mid^{2}\mathbf{I} \ \text{is  not  invertible}\}.
$$
Moreover, we define the $S$-point spectrum of $T$ by
$$
\sigma_{Sp}(T)=\left\{
s\in \mathbb{H}; \begin{array}{cc}\text{there exists a nonzero vector}~\mathbf{v}~\text{such} \\
\text{that}~(T^{2}-2s_{0}T+\mid s\mid^{2}\mathbf{I})\mathbf{v}=\mathbf{0}
\end{array}
\right\}.
$$
\end{definition}
Obviously, if $T$ is a right linear quaternionic operator (in fact, it is a matrix) on a finite dimensional two-side quaternionic vector space $V$, then
\[
\sigma_{Rp}(T)=\sigma_{R}(T), \ \ \ \sigma_{Lp}(T)=\sigma_{L}(T) \ \ \ \text{and} \ \ \  \sigma_{Sp}(T)=\sigma_{S}(T).
\]
For the case of infinite dimension, we refer to \cite{NFC} and \cite{HT}.

The study on some subgroups of the quaternionic matrix group $M_{n}(\mathbb{H})$ and the related similarity problems is an interesting and important topic. In the present paper, we focus on the subgroup $U(1,1;\mathbb{H})$ of $M_{2}(\mathbb{H})$, which is also called $Sp(1,1)$ somewhere. Let us introduce this subgroup firstly, more details and higher dimension cases can be seen in\cite{IKJRP2003}, \cite{WJX04} and \cite{CWS2016}.

Let $\mathbb{H}^{1,1}$ be the left vector space of dimension $2$ over $\mathbb{H}$ with the unitary structure defined by the Hermitian form
$$
\langle \mathbf{z},\mathbf{w} \rangle=\mathbf{w}^{\ast}J\mathbf{z}=\overline{w}z-\overline{w'}z',
$$
where $\mathbf{z}$ and $\mathbf{w}$ are the column vectors in $\mathbb{H}^{1,1}$ with entries $(z,z')$ and $(w,w')$ respectively, $\mathbf{w}^{\ast}$ denotes the conjugate transpose of $\mathbf{w}$ and $J$ is the Hermitian matrix $J=\begin{bmatrix}
1 & 0 \\
0 & -1
\end{bmatrix}$.
An automorphism $A$ on $\mathbb{H}^{1,1}$ is said to be a $J$-unitary transformation, if the automorphism $A$ satisfies $\langle A\mathbf{z}, A\mathbf{w} \rangle=\langle \mathbf{z},\mathbf{w} \rangle$ for all $\mathbf{z}$ and $\mathbf{w}$ in $\mathbb{H}^{1,1}$, that is, $A\in GL_{2}(\mathbb{H})$ and $A^*JA=J$. Denote the set of all $J$-unitary transformations by $U(1,1;\mathbb{H})$. Following from \cite{WJX04}, one can see that a matrix $T=\begin{bmatrix}
a & b \\
c & d
\end{bmatrix}$ is an element of $U(1,1;\mathbb{H})$ if and only if
\begin{equation}\label{1.1}
\mid a\mid=\mid d\mid,\ \mid b\mid=\mid c\mid,\ \mid a\mid^{2}-\mid c\mid^{2}=1,\ \overline{a}b=\overline{c}d,\ a\overline{c}=b\overline{d}.
\end{equation}
In addition, the set of all complex matrices satisfying (\ref{1.1}) is denoted by $U(1,1;\mathbb{C})$, equivalently,
\[
U(1,1;\mathbb{C})\triangleq\{A=\begin{bmatrix}
a & b \\
c & d
\end{bmatrix}\in M_2(\mathbb{C}); ~b=\bar{c},~d=\bar{a}~ \text{and}~ |a|^2-|c|^2=1\}.
\]

One reason we pay attention to $U(1,1;\mathbb{H})$ is that it is closely related to quaternionic M\"{o}bius transformations on the unit ball $\mathbb{B}\triangleq\{q\in\mathbb{H}; |q|<1\}$. Generally, M\"{o}bius transformations are the orientation preserving isometries of $n$-dimensional hyperbolic space. Ahlfors \cite{Ahlfors85} related  M\"{o}bius transformations to two-by-two matrices whose entries lie in a Clifford algebra. Then, in the study of M\"{o}bius transformations, the corresponding matrix group and hyperbolic metric are closely interconnected, see \cite{Ahlfors85F}, \cite{CW98}, \cite{G01}, \cite{IKJRP2003}, \cite{CW07}, \cite{BC09} and \cite{CWS2016} for example. In particular, W. Cao, J. Parker and X. Wang \cite{WJX04} studied the classification of quaternionic M\"{o}bius transformations on $\mathbb{B}$. They proved that every matrix $T=\begin{bmatrix}
a & b \\
c & d
\end{bmatrix}\in U(1,1;\mathbb{H})$ corresponds to a quaternionic M\"{o}bius transformation $g_T(z)=(az+b)(cz+d)^{-1}$ on the unit ball $\mathbb{B}$ and every quaternionic M\"{o}bius transformation on the unit ball $\mathbb{B}$ could be written as $g_T(z)=(az+b)(cz+d)^{-1}$ for some $T\in U(1,1;\mathbb{H})$, which is an analogy of the classical M\"{o}bius transformation group on the unit disk $\mathbb{D}\triangleq\{z\in\mathbb{C}; |z|<1\}$ is isomorphic to the group $PU(1,1;\mathbb{C})\triangleq U(1,1;\mathbb{C})/\{\pm \mathbf{I}_2\}$.
In fact,
\[
T=\begin{bmatrix}
a & b \\
c & d
\end{bmatrix}
\longrightarrow
g_T(z)=(az+b)(cz+d)^{-1}
\]
induces a group homomorphism from $U(1,1;\mathbb{H})$ to the group of all quaternionic M\"{o}bius transformations on $\mathbb{B}$ with kernel $\{\mathbf{I}_2, -\mathbf{I}_2\}$. In \cite{WJX04}, the notion $\triangle$ defined by
\begin{equation}\label{1.2}
\triangle=\mid Im(\overline{c}^{-1}b\overline{d}+d)\mid^{2}-\mid\overline{c}^{-1}b-1\mid^{2}, \ \ \text{if} \ b\neq \overline{c}\neq 0,
\end{equation}
plays an important role. Moreover, similar to the classification of classical M\"{o}bius transformations on $\mathbb{D}$ (see \cite{EAN1968} for example), Cao et.al.classified the quaternionic M\"{o}bius transformations $(az+b)(cz+d)^{-1}$ by $a,~b,~c,~d$. For a quaternionic M\"{o}bius transformation $g=(az+b)(cz+d)^{-1}$ preserving $\mathbb{B}$,

\noindent I. $b=\overline{c}=0$, then

(I.1) \ if $a_{0}=d_{0}$,  $g$ is simple elliptic;

(I.2) \ if $a_{0}\neq d_{0}$,  $g$ is compound elliptic;

\noindent II.  $b=\overline{c}\neq 0$, then

(II.1) \  if $d_{0}^{2}<1$,  $g$ is simple elliptic;

(II.2) \  if $d_{0}^{2}=1$,  $g$ is simple parabolic;

(II.3) \  if $d_{0}^{2}>1$,  $g$ is simple loxodromic;

\noindent III.  $b\neq\overline{c}\neq 0$, then

(III.1) \  if $\triangle<0$,  $g$ is compound elliptic;

(III.2) \  if $\triangle=0$,  $g$ is compound parabolic;

(III.3) \ if $\triangle>0$,  $g$ is compound loxodromic.

Furthermore, Zhou and Cao obtained the sufficient and necessary conditions for elliptic transformations and parabolic transformations from the view of geometry \cite{ZC2015}.

In this paper, we are interested in the similarity problem on $U(1,1;\mathbb{H})$, especially the $U(1,1;\mathbb{H})$-similarity problem, because $U(1,1;\mathbb{H})$-similar transformation corresponds to a composition of quaternionic M\"{o}bius transformations. In section $2$, some similar invariants on $U(1,1;\mathbb{H})$ are studied. In section $3$, we show that each matrix $T\in U(1,1;\mathbb{H})$, which corresponds to an elliptic quaternionic M\"{o}bius  transformation $g_T(z)$, could be $U(1,1;\mathbb{H})$-similar to a diagonal matrix. Notice that $U(1,1;\mathbb{H})$-similarity problem is more complicated because of the lack of commutativity of quaternion multiplication and the restriction of the similar transformation matrix in $U(1,1;\mathbb{H})$.

\section{Some invariants in $U(1,1;\mathbb{H})$}

In this section, we study some similar invariants in $U(1,1;\mathbb{H})$.

\subsection{Trace and $\Delta$}

Recall that For any $T=\begin{bmatrix}
a & b \\
c & d
\end{bmatrix}\in M_2(\mathbb{H})$, the trace $Tr$ of $T$ is $Tr(T)=2(a_{0}+d_{0})$. The trace $Tr$ has been well studied by Dokovi\'{c} and Smith \cite{DB2008}. In particular, $Tr$ is a similar invariant.

In \cite{WJX04}, the notation
\[
\triangle=Im(\overline{c}^{-1}b\overline{d}+d)\mid^{2}-\mid\overline{c}^{-1}b-1\mid^{2}
\]
for $T=\begin{bmatrix}
a & b \\
c & d
\end{bmatrix}\in U(1,1;\mathbb{H})$ with $b\neq \overline{c}\neq 0$ was introduced to study the classification of quaternionic M\"{o}bius transformations. In the present paper, we will extend this notation to the whole $U(1,1;\mathbb{H})$ by another form. For any $T=\begin{bmatrix}
a & b \\
c & d
\end{bmatrix}\in U(1,1;\mathbb{H})$, define
\[
\triangle(T)=\mid b-\overline{c}\mid^{2}-(a_{0}-d_{0})^{2}.
\]
\begin{lemma}
For any $T=\begin{bmatrix}
a & b \\
c & d
\end{bmatrix}\in U(1,1;\mathbb{H})$ with $b\neq \overline{c}\neq 0$, $\triangle(T)$ coincides with $\triangle$.
\end{lemma}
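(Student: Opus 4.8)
The plan is to introduce the quaternion $w:=\overline{c}^{-1}b$ and to express both sides of the asserted identity through $w$, $d$, and the real parts $a_{0},d_{0}$. First I would note that $|w|=|b|/|c|=1$ by the second relation in \eqref{1.1}, and that $b-\overline{c}=\overline{c}(w-1)$, whence $|b-\overline{c}|^{2}=|c|^{2}|w-1|^{2}$. Since \eqref{1.1} also gives $|a|^{2}=|c|^{2}+1=|d|^{2}$, the claimed equality $\triangle(T)=\triangle$ becomes, after transposing $|w-1|^{2}$ and using $|Im(q)|^{2}=|q|^{2}-Re(q)^{2}$, equivalent to
\begin{equation}\label{eq:step1}
|a|^{2}|w-1|^{2}-(a_{0}-d_{0})^{2}=|Im(w\overline{d}+d)|^{2}.
\end{equation}

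Next I would pin down $w\overline{d}$. Using $a\overline{c}=b\overline{d}$ from \eqref{1.1} one obtains
\[
w\overline{d}=\overline{c}^{-1}b\overline{d}=\overline{c}^{-1}a\overline{c}=uau^{-1},\qquad u:=c/|c|,
\]
so $w\overline{d}$ is a quaternionic conjugate of $a$ by the unit quaternion $u$. Consequently $Re(w\overline{d})=a_{0}$ and $|w\overline{d}|=|a|=|d|$. Therefore $Re(w\overline{d}+d)=a_{0}+d_{0}$, hence $|Im(w\overline{d}+d)|^{2}=|w\overline{d}+d|^{2}-(a_{0}+d_{0})^{2}$; substituting this into \eqref{eq:step1} and using $|a|^{2}=|d|^{2}$ reduces the statement to
\begin{equation}\label{eq:step2}
|d|^{2}|w-1|^{2}=|w\overline{d}+d|^{2}-4a_{0}d_{0}.
\end{equation}

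Finally I would close the computation by writing $|d|^{2}|w-1|^{2}=|(w-1)\overline{d}|^{2}=|w\overline{d}-\overline{d}|^{2}$ (multiplicativity of the modulus on $\mathbb{H}$) and expanding both $|w\overline{d}-\overline{d}|^{2}$ and $|w\overline{d}+d|^{2}$ by $|x\pm y|^{2}=|x|^{2}+|y|^{2}\pm 2Re(x\overline{y})$. Since $|w\overline{d}|=|\overline{d}|=|d|$, the squared-modulus terms match, and the two cross terms combine into $2Re\big(w\overline{d}\,(d+\overline{d})\big)=4d_{0}\,Re(w\overline{d})=4a_{0}d_{0}$, which is precisely \eqref{eq:step2}. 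The main obstacle is nothing more than careful bookkeeping in the noncommutative setting: the whole argument rests on the two facts that $w\overline{d}=uau^{-1}$ (so the real part and the modulus of $w\overline{d}$ are those of $a$) and that $Re$ is cyclic together with $d+\overline{d}=2d_{0}\in\mathbb{R}$, and no step requires passing to the coordinate form of a quaternion; so I do not expect a genuine difficulty beyond verifying these routine identities.
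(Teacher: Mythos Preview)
Your proof is correct and follows essentially the same approach as the paper: both arguments factor $b-\overline{c}=\overline{c}(w-1)$ with $w=\overline{c}^{-1}b$, use $|c|^{2}+1=|d|^{2}$, and rely on the key observation that $w\overline{d}=\overline{c}^{-1}a\overline{c}$ is a quaternionic conjugate of $a$ (so $Re(w\overline{d})=a_{0}$) together with $|Im(q)|^{2}=|q|^{2}-Re(q)^{2}$. The paper presents this as a single chain of equalities while you reduce to an explicit identity and verify the cross terms, but the underlying computation is the same.
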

\begin{proof}
\begin{align*}
\triangle(T)&=\mid b-\overline{c}\mid^{2}-(a_{0}-d_{0})^{2}\\
&=\mid \overline{c}(\overline{c}^{-1}b-1)\mid^{2}-\mid Re(a-d)\mid^{2}\\
&=\mid \overline{c}^{-1}b-1\mid^{2}(\mid d\mid^{2}-1)-\mid Re(\overline{c}^{-1}b\overline{d}-\overline{d})\mid^{2}\\
&=\mid Im(\overline{c}^{-1}b-1)\overline{d}\mid^{2}-\mid \overline{c}^{-1}b-1\mid^{2}\\
&=\mid Im(\overline{c}^{-1}b\overline{d}+d)\mid^{2}-\mid\overline{c}^{-1}b-1\mid^{2}\\
&=\triangle.
\end{align*}
\end{proof}

For convenience, we will use $\triangle(T)$ instead of $\triangle$. Moreover, it is not difficult to find that
\begin{enumerate}
\item $\triangle(T)=0$, if $b=\overline{c}=0,\ a_{0}=d_{0}$,
\item $\triangle(T)<0$, if $b=\overline{c}=0,\ a_{0}\neq d_{0}$,
\item $\triangle(T)=0$, if $b=\overline{c}\neq 0$,
\item $\triangle(T)=\triangle$, if $b\neq \overline{c}\neq 0$.
\end{enumerate}

To consider the elements in $U(1,1;\mathbb{H})$, one can see that $\triangle(T)$ is decided by $Tr(T)$ and $Tr(T^2)$.
\begin{theorem}\label{triT}
For any $T=\begin{bmatrix}
a & b \\
c & d
\end{bmatrix}\in U(1,1;\mathbb{H})$, we have
\[
\triangle(T)=\frac{1}{4}(Tr(T))^{2}-\frac{1}{2}Tr(T^{2})-2.
\]
Furthermore, $\triangle(T)$ is a similar invariant in $U(1,1;\mathbb{H})$.
\end{theorem}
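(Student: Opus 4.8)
The plan is to establish the identity by a direct computation with the matrix entries, and then read off the invariance statement from the fact that $Tr$ is a similarity invariant.

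First I would evaluate the two traces on the right-hand side. Since $Tr(T)=2(a_{0}+d_{0})$, we have $\frac14(Tr(T))^{2}=(a_{0}+d_{0})^{2}$. Squaring $T$, the diagonal entries of $T^{2}$ are $a^{2}+bc$ and $cb+d^{2}$, so $Tr(T^{2})=2\,Re(a^{2}+bc+cb+d^{2})$. Here I would use two elementary quaternionic facts: $Re(pq)=Re(qp)$ for all $p,q\in\mathbb{H}$ (so $Re(bc)=Re(cb)$), and $Re(q^{2})=2(Re\,q)^{2}-|q|^{2}$ (obtained by writing $q=q_{0}+Im(q)$ and squaring, since $Im(q)^{2}=-|Im(q)|^{2}$). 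These give $\frac12 Tr(T^{2})=2a_{0}^{2}-|a|^{2}+2d_{0}^{2}-|d|^{2}+2\,Re(bc)$, and therefore
\[
\frac14(Tr(T))^{2}-\frac12 Tr(T^{2})-2=-(a_{0}-d_{0})^{2}+|a|^{2}+|d|^{2}-2\,Re(bc)-2.
\]

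Next I would invoke the defining relations (\ref{1.1}) of $U(1,1;\mathbb{H})$, namely $|a|=|d|$, $|b|=|c|$ and $|a|^{2}-|c|^{2}=1$. From these, $|a|^{2}+|d|^{2}=2|a|^{2}=2+2|c|^{2}$, so the displayed expression collapses to $2|c|^{2}-2\,Re(bc)-(a_{0}-d_{0})^{2}$. On the other hand, expanding the definition of $\triangle(T)$ directly, $|b-\overline{c}|^{2}=(b-\overline{c})(\overline{b}-c)=|b|^{2}+|c|^{2}-(bc+\overline{bc})=2|c|^{2}-2\,Re(bc)$, again using $|b|=|c|$; hence $\triangle(T)=|b-\overline{c}|^{2}-(a_{0}-d_{0})^{2}=2|c|^{2}-2\,Re(bc)-(a_{0}-d_{0})^{2}$, which is exactly the same expression. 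This proves the formula.

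For the invariance claim: $Tr$ is invariant under $GL_{2}(\mathbb{H})$-similarity (for instance because $Re(Tr(PQ))=Re(Tr(QP))$ for quaternionic matrices, with $Tr$ here denoting the ordinary sum of diagonal entries), hence in particular under $U(1,1;\mathbb{H})$-similarity; this is already recorded in \cite{DB2008}. If $S=G^{-1}TG$ with $G\in U(1,1;\mathbb{H})$, then $S^{2}=G^{-1}T^{2}G$ as well, so $Tr(T)=Tr(S)$ and $Tr(T^{2})=Tr(S^{2})$. Since the formula just proved expresses $\triangle(T)$ solely in terms of $Tr(T)$ and $Tr(T^{2})$, it follows that $\triangle(T)=\triangle(S)$. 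I do not expect a real obstacle here: the only steps demanding care are the bookkeeping of real parts of quaternion products (notably that $|b-\overline{c}|^{2}$ contributes $2\,Re(bc)$, not $2\,Re(b\overline{c})$) and the systematic use of (\ref{1.1}) to eliminate the modulus terms.
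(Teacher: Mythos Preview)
Your proof is correct and follows essentially the same route as the paper: compute $Tr(T^{2})$ from the diagonal of $T^{2}$, use the relations (\ref{1.1}) together with $|b-\overline{c}|^{2}=|b|^{2}+|c|^{2}-2\,Re(bc)$ to match the right-hand side with the definition of $\triangle(T)$, and then read off invariance from the similarity invariance of $Tr$. The only difference is cosmetic ordering of the algebra.
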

\begin{proof}
Computing straightly, we have
\[
T^{2}=\begin{bmatrix}
a^{2}+bc & ab+bd \\
ca+dc & cb+d^{2}
\end{bmatrix}.
\]
Then
\begin{align*}
Tr(T^{2})=&2Re(a^{2}+bc+cb+d^{2})\\
=& 4a_{0}^{2}+4d_{0}^{2}-4-2(\mid b\mid^{2}+\mid c\mid^{2}-2Re(bc)) \\
=& 4a_{0}^{2}+4d_{0}^{2}-4-2\triangle(T)-2(a_{0}-d_{0})^{2}\\
=& 2(a_{0}+d_{0})^{2}-4-2\triangle(T),
\end{align*}
and consequently,
\begin{equation}\label{Tr}
\triangle(T)=\frac{1}{4}(Tr(T))^{2}-\frac{1}{2}Tr(T^{2})-2.
\end{equation}
Since $Tr(T)$ and $Tr(T^{2})$ are both similar invariants, then $\triangle(T)$ is also a similar invariant in $U(1,1;\mathbb{H})$.
\end{proof}

We could also obtain some identities with respect to $\triangle(T^{n})$, for some positive integer $n$.

\begin{proposition}\label{triT2}
Let $T=\begin{bmatrix}
a & b \\
c & d
\end{bmatrix}\in U(1,1;\mathbb{H})$. Then,
\[
\triangle(T^{2})=(Tr(T))^{2}\cdot\triangle(T).
\]
\end{proposition}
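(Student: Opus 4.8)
The plan is to apply the identity \eqref{Tr} from Theorem~\ref{triT} to the matrix $T^{2}$ in place of $T$, so that
\[
\triangle(T^{2})=\frac{1}{4}\bigl(Tr(T^{2})\bigr)^{2}-\frac{1}{2}Tr(T^{4})-2,
\]
and then to reduce everything to $Tr(T)$ and $Tr(T^{2})$ (equivalently, to $Tr(T)$ and $\triangle(T)$ via \eqref{Tr} again). This requires expressing $Tr(T^{4})$ in terms of $Tr(T)$ and $Tr(T^{2})$. First I would record, using the characterization \eqref{1.1} of $U(1,1;\mathbb{H})$, that every $T\in U(1,1;\mathbb{H})$ satisfies a quadratic-type relation: from Theorem~\ref{triT}'s computation one has $Tr(T^{2})=2(a_{0}+d_{0})^{2}-4-2\triangle(T)$, and I expect $T$ to satisfy $T^{2}-Tr(T)\,T+(\text{something})\,\mathbf{I}_{2}=0$ up to the noncommutativity subtleties — more precisely, since $\det$ is delicate over $\mathbb{H}$, I would instead work directly with the real scalars $s:=Tr(T)=2(a_{0}+d_{0})$ and iterate the trace computation.

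The key computational steps, in order, are: (1) compute $T^{2}$ explicitly (already done in the proof of Theorem~\ref{triT}) and then $T^{4}=(T^{2})^{2}$, extracting $Tr(T^{4})=2Re\bigl((a^{2}+bc)^{2}+(ab+bd)(ca+dc)+(ca+dc)(ab+bd)+(cb+d^{2})^{2}\bigr)$; (2) simplify this using \eqref{1.1} — in particular $|a|=|d|$, $|b|=|c|$, $|a|^{2}-|c|^{2}=1$, $\overline{a}b=\overline{c}d$, $a\overline{c}=b\overline{d}$ — to write $Tr(T^{4})$ as a polynomial in $a_{0},d_{0},|b|^{2},Re(bc)$; (3) use $|b|^{2}+|c|^{2}-2Re(bc)=\triangle(T)+(a_{0}-d_{0})^{2}$ and $|b|^{2}=|c|^{2}$ to eliminate $Re(bc)$, obtaining $Tr(T^{4})$ purely in terms of $a_{0},d_{0},\triangle(T)$; (4) substitute into $\triangle(T^{2})=\frac14(Tr(T^{2}))^{2}-\frac12 Tr(T^{4})-2$ and simplify. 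At that point I expect the right-hand side to collapse to $\bigl(2(a_{0}+d_{0})\bigr)^{2}\triangle(T)=(Tr(T))^{2}\triangle(T)$.

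As a sanity check and a shortcut for part of the algebra, I would verify the claim first on the diagonal case and the $U(1,1;\mathbb{C})$ case, and also note the consistency test: applying the formula to $T$ and then again should be compatible with $\triangle(T^{2^{k}})$, which by induction would give $\triangle(T^{2^{k}})=\bigl(\prod_{j=0}^{k-1}(Tr(T^{2^{j}}))^{2}\bigr)\triangle(T)$ — a pattern worth stating as a remark. The main obstacle is purely the bookkeeping in step (2)–(3): expanding $Tr(T^{4})$ generates many cross terms of the form $Re(ab\,ca)$, $Re(bc\,d^{2})$, etc., and over $\mathbb{H}$ one cannot freely cyclically permute inside $Re(\cdot)$ beyond $Re(xy)=Re(yx)$, so each term must be reduced carefully using the relations $\overline{a}b=\overline{c}d$ and $a\overline{c}=b\overline{d}$ (and their conjugates) to funnel everything through the two real invariants $a_{0},d_{0}$ and the real quantity $\triangle(T)$. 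I do not foresee a conceptual difficulty once the relations in \eqref{1.1} are used systematically; the payoff is that, as with $\triangle(T)$ itself, $\triangle(T^{2})$ ends up depending only on the real trace data, which is what makes the clean product formula possible.
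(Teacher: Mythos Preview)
Your approach is correct in principle but takes a genuinely different and considerably longer route than the paper. The paper does \emph{not} pass through $Tr(T^{4})$ at all: it applies the \emph{definition} $\triangle(T^{2})=\bigl|T^{(2)}_{12}-\overline{T^{(2)}_{21}}\bigr|^{2}-\bigl(Re(T^{(2)}_{11})-Re(T^{(2)}_{22})\bigr)^{2}$ directly to the entries of $T^{2}$, and the whole proof is a single factorisation. Writing $T^{2}=\begin{bmatrix} a^{2}+bc & ab+bd\\ ca+dc & cb+d^{2}\end{bmatrix}$, one inserts and removes the quantities $a\overline{c}=b\overline{d}$ and $\overline{a}b=\overline{c}d$ in $ab+bd-\overline{a}\,\overline{c}-\overline{c}\,\overline{d}$ to obtain $(a+\overline{a})(b-\overline{c})+(b-\overline{c})(d+\overline{d})=2(a_{0}+d_{0})(b-\overline{c})$; together with $Re(a^{2}+bc)-Re(cb+d^{2})=2a_{0}^{2}-2d_{0}^{2}$ this gives $\triangle(T^{2})=4(a_{0}+d_{0})^{2}\bigl(|b-\overline{c}|^{2}-(a_{0}-d_{0})^{2}\bigr)=(Tr(T))^{2}\triangle(T)$ in three lines.

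Your route --- apply \eqref{Tr} to $T^{2}$ and reduce $Tr(T^{4})$ to $Tr(T),Tr(T^{2})$ --- does work (indeed the right spectrum of $T\in U(1,1;\mathbb{H})$ is determined by $Tr(T)$ and $\triangle(T)$, so every $Tr(T^{n})$ is a function of these two invariants), but it trades one short factorisation for a full expansion of $(T^{2})^{2}$ and term-by-term reduction of expressions like $Re(abca)$, $Re(bcbc)$, $Re(bd\,dc)$ using \eqref{1.1}. The tentative Cayley--Hamilton relation you mention does not hold over $\mathbb{H}$ and cannot be used as a shortcut, so you really would have to do the bookkeeping you anticipate. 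What your approach buys is uniformity (it mechanically extends to $\triangle(T^{2^{k}})$ and yields the iterated product you state), while the paper's approach buys brevity and transparency: the factor $Tr(T)$ is visible the moment one rewrites the off-diagonal difference as $2(a_{0}+d_{0})(b-\overline{c})$.
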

\begin{proof}
Following from
\[
T^{2}=\begin{bmatrix}
a^{2}+bc & ab+bd \\
ca+dc & cb+d^{2}
\end{bmatrix}, \ \ \ a\overline{c}=b\overline{d} \ \ \ \text{and} \ \ \ \overline{a}b=\overline{c}d,
\]
one can see that
\begin{align*}
\triangle(T^{2})=& \mid ab+bd-\overline{a}\cdot\overline{c}-\overline{c}\overline{d}\mid^{2}-(2a_{0}^{2}-2d_{0}^{2})^{2} \\
=&\mid ab-a\overline{c}+a\overline{c}+bd-\overline{c}d+\overline{c}d+\overline{a}b-\overline{a}b-\overline{a}\cdot\overline{c}+b\overline{d}-b\overline{d}-\overline{c}\overline{d} \mid^{2} \\
&-4(a_{0}^{2}-d_{0}^{2})^{2} \\
=& \mid (a+\overline{a})(b-\overline{c})+(b-\overline{c})(d+\overline{d}) \mid^{2}-4(a_{0}+d_{0})^{2}(a_{0}-d_{0})^{2} \\
=&4(a_{0}+d_{0})^{2}(\mid b-\overline{c} \mid^{2}-(a_{0}-d_{0})^{2})\\
=&(Tr(T))^{2}\cdot\triangle(T).
\end{align*}
\end{proof}

\begin{corollary}
Let $T=\begin{bmatrix}
a & b \\
c & d
\end{bmatrix}\in U(1,1;\mathbb{H})$ with $Tr(T)=0$. Then, $T^2$ corresponds to a simple quaternionic M\"{o}bius transformation $g_{T^2}$.
\end{corollary}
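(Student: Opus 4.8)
The plan is to apply the classification of quaternionic M\"obius transformations recalled in the Introduction to the matrix
\[
T^{2}=\begin{bmatrix} a^{2}+bc & ab+bd \\ ca+dc & cb+d^{2} \end{bmatrix},
\]
which again belongs to $U(1,1;\mathbb{H})$ because this set is a group, so that $g_{T^{2}}$ is a genuine quaternionic M\"obius transformation on $\mathbb{B}$ to which the classification applies. Write $b'=ab+bd$ and $c'=ca+dc$ for the off-diagonal entries of $T^{2}$. The first step is to record the identity
\[
b'-\overline{c'}=(a+\overline{a})(b-\overline{c})+(b-\overline{c})(d+\overline{d})=Tr(T)\,(b-\overline{c}),
\]
which is exactly the computation already carried out inside the proof of Proposition~\ref{triT2} (one uses $a\overline{c}=b\overline{d}$ and $\overline{a}b=\overline{c}d$, together with the fact that $a+\overline{a}=2a_{0}$ and $d+\overline{d}=2d_{0}$ are real). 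Hence the hypothesis $Tr(T)=0$ forces $b'=\overline{c'}$, so $T^{2}$ lies in case I or case II of the classification; case III is excluded since it requires $b'\neq\overline{c'}$.

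Next I would split according to whether $b'$ vanishes. If $b'=\overline{c'}\neq 0$, then $T^{2}$ lies in case II, and each of the subcases II.1, II.2, II.3 is simple, so $g_{T^{2}}$ is simple and we are done. If instead $b'=\overline{c'}=0$, then $T^{2}$ is diagonal and lies in case I, and one must still check that its two diagonal entries have equal real parts in order to land in the simple subcase I.1 rather than the compound subcase I.2; that is, one needs $Re(a^{2}+bc)=Re(cb+d^{2})$. Since $Re(xy)=Re(yx)$ for all quaternions we have $Re(bc)=Re(cb)$, so this reduces to $Re(a^{2})=Re(d^{2})$. Finally, $Re(a^{2})=2a_{0}^{2}-\mid a\mid^{2}$ and $Re(d^{2})=2d_{0}^{2}-\mid d\mid^{2}$; here $\mid a\mid=\mid d\mid$ by (\ref{1.1}) and $a_{0}^{2}=d_{0}^{2}$ because $Tr(T)=0$ gives $a_{0}=-d_{0}$, so the two real parts agree and $g_{T^{2}}$ is simple elliptic.

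The point worth flagging is that the quick route --- invoking Proposition~\ref{triT2} to get $\triangle(T^{2})=(Tr(T))^{2}\,\triangle(T)=0$ --- is by itself not enough, since $\triangle=0$ also holds for compound parabolic maps (case III.2). One genuinely needs the sharper identity $b'=\overline{c'}$ to rule out case III, and then the separate real-part bookkeeping to distinguish I.1 from I.2 in the degenerate diagonal case. Neither ingredient is computationally heavy: the first is essentially contained in the proof of Proposition~\ref{triT2}, and the second is the short observation above, so I do not expect any serious obstacle.
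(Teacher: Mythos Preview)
Your proof is correct and follows essentially the same route as the paper: first show $b'-\overline{c'}=Tr(T)\,(b-\overline{c})=0$ to exclude case~III, then observe that case~II is automatically simple, and finally verify equality of the diagonal real parts to land in case~I.1 when $b'=\overline{c'}=0$. The only minor difference is that the paper obtains $Re(T^{(2)}_{11})=Re(T^{(2)}_{22})$ by invoking Proposition~\ref{triT2} (since $\triangle(T^{2})=0$ together with $b'=\overline{c'}$ forces the diagonal real parts to coincide by the very definition of $\triangle$), whereas you compute $Re(a^{2})=Re(d^{2})$ directly from $\mid a\mid=\mid d\mid$ and $a_{0}^{2}=d_{0}^{2}$; both arguments are equally short.
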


\begin{proof}
Denote
\[
T^{2}=\begin{bmatrix}
T^{(2)}_{11} & T^{(2)}_{12} \\
T^{(2)}_{21} & T^{(2)}_{22}
\end{bmatrix}=\begin{bmatrix}
a^{2}+bc & ab+bd \\
ca+dc & cb+d^{2}
\end{bmatrix}.
\]
One can see that
\begin{align*}
T^{(2)}_{12}-\overline{T^{(2)}_{21}}=&ab+bd-\overline{(ca+dc)} \\
=&ab+bd-\bar{a}\bar{c}-\bar{c}\bar{d}   \\
=&ab+\bar{a}b-\bar{a}b+bd+b\bar{d}-b\bar{d}-\bar{a}\bar{c}-a\bar{c}+a\bar{c}-\bar{c}\bar{d}-\bar{c}d+\bar{c}d   \\
=&2a_0b+2d_0b-2a_0\bar{c}-2d_0\bar{c}-\bar{a}b-b\bar{d}b+a\bar{c}+\bar{c}d \\
=&2(a_0+d_0)(b-\bar{c}) \\
=&0.
\end{align*}
and by Proposition \ref{triT2} and $Tr(T)=0$,
\[
\triangle(T^{2})=(Tr(T))^{2}\cdot\triangle(T)=0,
\]
and consequently $Re(T^{(2)}_{11})=Re(T^{(2)}_{22})$.

Therefore,
\begin{enumerate}
\item if $T^{(2)}_{12}=\overline{T^{(2)}_{21}}\neq0$, then $g_{T^2}$ is a simple quaternionic M\"{o}bius transformation;

\item if $T^{(2)}_{12}=\overline{T^{(2)}_{21}}=0$, then it follows from $Re(T^{(2)}_{11})=Re(T^{(2)}_{22})$ that $g_{T^2}$ is a simple elliptic quaternionic M\"{o}bius transformation.
\end{enumerate}
The proof is finished.
\end{proof}

\begin{corollary}
Let $T=\begin{bmatrix}
a & b \\
c & d
\end{bmatrix}\in U(1,1;\mathbb{H})$ with $Tr(T)=0$. Then, $Tr(T^{2})\leq 4$ and the equality holds if and only if $d\in\mathbb{R}$.
\end{corollary}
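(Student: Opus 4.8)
The plan is to reduce the claim to a single scalar inequality and then to prove that inequality by exploiting one of the defining relations of $U(1,1;\mathbb{H})$. First I would use the identity $Tr(T^{2})=2(a_{0}+d_{0})^{2}-4-2\triangle(T)$ obtained in the proof of Theorem \ref{triT}. The hypothesis $Tr(T)=0$ says $a_{0}=-d_{0}$, so this collapses to $Tr(T^{2})=-4-2\triangle(T)$; hence $Tr(T^{2})\le 4$ is equivalent to $\triangle(T)\ge -4$, and equality holds in one precisely when it holds in the other. Since $\triangle(T)=|b-\overline{c}|^{2}-(a_{0}-d_{0})^{2}=|b-\overline{c}|^{2}-4d_{0}^{2}$ and $|b|=|c|$ gives $|b-\overline{c}|^{2}=2|c|^{2}-2Re(bc)$, the whole theorem becomes the single estimate
\[
2d_{0}^{2}+Re(bc)-|c|^{2}\le 2,
\]
to be established with equality if and only if $d\in\mathbb{R}$.

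Next I would dispose of the trivial case $c=0$: then $b=0$, $|a|^{2}=1$, and the left-hand side is $2d_{0}^{2}\le 2|d|^{2}=2|a|^{2}=2$, with equality exactly when $|Re(d)|=|d|$, i.e.\ $d\in\mathbb{R}$ (forcing $d=\pm 1$). For $c\neq 0$ the heart of the matter is a sharp bound on $Re(bc)$; the naive estimate $Re(bc)\le|bc|=|c|^{2}$ only yields $2d_{0}^{2}+Re(bc)-|c|^{2}\le 2d_{0}^{2}$, which is useless because $d_{0}^{2}$ can genuinely exceed $1$ when $|c|$ is large. Instead I would use $\overline{a}b=\overline{c}d$ together with the invertibility of $a$ (guaranteed by $|a|^{2}=1+|c|^{2}$) to write $b=|a|^{-2}a\overline{c}d$, so that $Re(bc)=|a|^{-2}Re(ca\overline{c}d)$ by cyclic invariance of the real part. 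Splitting $a=a_{0}+Im(a)$ and $d=d_{0}+Im(d)$, using that $c\,Im(a)\,\overline{c}$ is purely imaginary of modulus $|c|^{2}|Im(a)|$, and invoking $|Im(a)|=|Im(d)|$ (which follows from $|a|=|d|$ and $a_{0}^{2}=d_{0}^{2}$), a short computation gives
\[
Re(bc)\le |c|^{2}-\frac{2d_{0}^{2}|c|^{2}}{|a|^{2}}.
\]
Substituting this and using $|a|^{2}-|c|^{2}=1$ collapses the left-hand side of the target inequality to $2d_{0}^{2}/|a|^{2}$, which is $\le 2$ because $d_{0}^{2}\le|d|^{2}=|a|^{2}$.

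For the equality case I would run the chain backwards: equality forces $d_{0}^{2}=|d|^{2}$, hence $Im(d)=0$, hence $d\in\mathbb{R}$; conversely, if $d\in\mathbb{R}$ then $Im(a)=0$ as well (since $|Im(a)|=|Im(d)|$), so $a=-d$ is real, and then a direct computation from $b=a^{-1}\overline{c}d$ gives $Re(bc)=-|c|^{2}$ and $Tr(T^{2})=4(|a|^{2}-|c|^{2})=4$. I expect the only real obstacle to be the sharp estimate on $Re(bc)$: it is exactly here that the relation $\overline{a}b=\overline{c}d$ must be brought in, and keeping track of the sign and of the correction term $2d_{0}^{2}|c|^{2}/|a|^{2}$ is the one delicate point; everything else is bookkeeping.
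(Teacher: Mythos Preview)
Your argument is correct and reaches the same key intermediate bound as the paper, namely $\triangle(T)\ge -4d_{0}^{2}/|d|^{2}$ (whence $\triangle(T)\ge -4$ with equality iff $d\in\mathbb{R}$), but you arrive there by a different and somewhat more streamlined route.

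The paper splits according to whether $b=\overline{c}$ or $b\neq\overline{c}$. In the second case it writes $\overline{c}^{-1}b=R_{t}+\sqrt{1-R_{t}^{2}}\,I_{t}$ and $\overline{d}=d_{0}+\sqrt{|d|^{2}-d_{0}^{2}}\,I_{d}$, translates the constraint $a_{0}+d_{0}=0$ into $Re(\overline{c}^{-1}b\overline{d})+d_{0}=0$, and from this derives a quadratic inequality in $R_{t}$ that yields $R_{t}\le(|d|^{2}-2d_{0}^{2})/|d|^{2}$; substituting into $\triangle(T)=2|c|^{2}(1-R_{t})-4d_{0}^{2}$ gives the bound. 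Your approach instead splits only on $c=0$ versus $c\neq0$, uses $\overline{a}b=\overline{c}d$ to write $b=|a|^{-2}a\overline{c}d$, and then bounds $Re(bc)=|a|^{-2}Re(ca\overline{c}d)$ directly by decomposing $a$ and $d$ into real and imaginary parts and applying $|Re(c\,Im(a)\,\overline{c}\cdot Im(d))|\le|c|^{2}|Im(a)||Im(d)|$. The two computations are formally different rearrangements of the same information, and both culminate in the identical estimate $\triangle(T)\ge-4d_{0}^{2}/|d|^{2}$. Your version has the minor advantage of handling the cases $b=\overline{c}\neq0$ and $b\neq\overline{c}$ uniformly, at the cost of making the ``delicate point'' you flag (the sharp bound on $Re(bc)$) the entire substance of the argument; the paper's polar-form calculation makes the role of the hypothesis $Tr(T)=0$ more visibly algebraic but requires an extra case distinction.
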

\begin{proof}
Let us prove this conclusion in the cases $b=\overline{c}$ and $b\neq\overline{c}$, respectively.

{\bf 1.} Suppose that $b=\overline{c}$.

If $\triangle(T)=0$, then it follows from (\ref{Tr}) that $Tr(T^{2})=-4< 4$.

If $\triangle(T)<0$, then $\ a_{0}=-d_{0}\neq 0$ and consequently $\triangle(T)=-4d_{0}^{2}$. Notice that in this case, $b=\overline{c}=0$ and consequently $|a|=|d|=1$. Hence
\[
Tr(T^{2})=8d_{0}^{2}-4=8(d_{0}^{2}-1)+4\leq 4
\]
and the equality holds if and only if $d=\pm 1$.

{\bf 2.} Suppose that  $b\neq\overline{c}$.

The assumption $Tr(T)=0$ means that $a_{0}+d_{0}=Re(\overline{c}^{-1}b\overline{d})+d_{0}=0$. Let
\[
\overline{c}^{-1}b=R_{t}+\sqrt{1-R_{t}^{2}}I_{t} \ \ \ \text{and} \ \  \ \overline{d}=d_{0}+\sqrt{\mid d\mid^{2}-d_{0}^{2}}I_{d},
\]
where $I_{t}$ and $I_{d}$ are the imaginary units of $\overline{c}^{-1}b$ and $\overline{d}$, respectively. Then
$$
Re(\overline{c}^{-1}b\overline{d})+d_{0}=R_{t}d_{0}+\sqrt{1-R_{t}^{2}} \sqrt{\mid d\mid^{2}-d_{0}^{2}} Re(I_{t}I_{d})+d_{0}=0,
$$
that is
$$
(R_{t}+1)d_{0}=-\sqrt{1-R_{t}^{2}} \sqrt{\mid d\mid^{2}-d_{0}^{2}} Re(I_{t}I_{d}).
$$
Then
$$
\mid(R_{t}+1)d_{0}\mid=\sqrt{1-R_{t}^{2}} \sqrt{\mid d\mid^{2}-d_{0}^{2}} \mid Re(I_{t}I_{d})\mid\leq \sqrt{1-R_{t}^{2}} \sqrt{\mid d\mid^{2}-d_{0}^{2}},
$$
and consequently,
\[
\mid d\mid^{2}R_{t}^{2}+2d_{0}^{2}R_{t}+2d_{0}^{2}-\mid d\mid^{2}\leq 0.
\]
This implies
$$
R_{t}\leq \frac{\mid d\mid^{2}-2d_{0}^{2}}{\mid d\mid^{2}}.
$$
By the definition of $\triangle(T)$, we have
\[
\triangle(T)=2\mid c\mid^{2}(1-R_{t})-4d_{0}^{2}\geq \frac{4(\mid d\mid^{2}-1)d_{0}^{2}}{\mid d\mid^{2}}-4d_{0}^{2}=-\frac{4\mid d_0\mid^{2}}{\mid d\mid^{2}}.
\]
Furthermore, one can obtain that $\triangle(T)\geq -4$ and the equality holds if and only if $d$ is a real number. Therefore,
\[
Tr(T^2)=\frac{1}{2}(Tr(T))^2-4-2\triangle(T)\le 4,
\]
and the equality holds if and only if $d$ is a real number.

In a word, we always have $Tr(T^{2})\leq 4$ and the equality holds if and only if $d\in\mathbb{R}$. The proof is finished.
\end{proof}

\begin{proposition}\label{triT3}
Let $T=\begin{bmatrix}
a & b \\
c & d
\end{bmatrix}\in U(1,1;\mathbb{H})$. Then,
\[
\triangle(T^{3})=\left(\frac{1}{2}(Tr(T))^{2}+\frac{1}{2}Tr(T^{2})-1\right)^{2}\cdot\triangle(T).
\]
\end{proposition}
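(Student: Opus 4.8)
The plan is to imitate the proof of Proposition~\ref{triT2}. Write $T^{n}=\begin{bmatrix}A_{n} & B_{n} \\ C_{n} & D_{n}\end{bmatrix}$, so that by definition $\triangle(T^{n})=\mid B_{n}-\overline{C_{n}}\mid^{2}-(Re(A_{n})-Re(D_{n}))^{2}$. The computation carried out for Proposition~\ref{triT2} in fact establishes a sharper statement for $n=2$ (and trivially for $n=1$): there is a \emph{real} scalar $\rho_{n}$, depending only on $T$, such that
\[
B_{n}-\overline{C_{n}}=\rho_{n}(b-\overline{c})\qquad\text{and}\qquad Re(A_{n})-Re(D_{n})=\rho_{n}(a_{0}-d_{0}),
\]
from which $\triangle(T^{n})=\rho_{n}^{2}\triangle(T)$ follows at once; here $\rho_{1}=1$ and $\rho_{2}=Tr(T)$. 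Thus it suffices to verify these two identities for $n=3$ and then to identify $\rho_{3}$.

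Starting from the expression for $T^{2}$ already recorded, a direct multiplication gives $B_{3}=a^{2}b+abd+bcb+bd^{2}$, $C_{3}=ca^{2}+cbc+dca+d^{2}c$, and similarly for $A_{3}$ and $D_{3}$. For the off-diagonal difference $B_{3}-\overline{C_{3}}=a^{2}b+abd+bcb+bd^{2}-\bar a^{2}\bar c-\bar c\bar b\bar c-\bar a\bar c\bar d-\bar c\bar d^{2}$, I would add and subtract copies of $a\bar c$, $b\bar d$, $\bar a b$, $\bar c d$ as in Proposition~\ref{triT2} and repeatedly apply all five relations in (\ref{1.1}); the extra ingredient beyond the $n=2$ case is the linearisation $q^{2}=2q_{0}q-\mid q\mid^{2}$ for a quaternion $q$, which disposes of the squares $a^{2},d^{2}$ and, applied to $b$ and $c$, of the sandwiched terms $bcb,\bar c\bar b\bar c$; this should collapse $B_{3}-\overline{C_{3}}$ to a real multiple of $b-\overline{c}$. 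For the diagonal part, the cyclicity $Re(xyz)=Re(yzx)$ gives $Re(A_{3})-Re(D_{3})=Re(a^{3})-Re(d^{3})+Re(abc)-Re(bdc)$; here $Re(a^{3})-Re(d^{3})=(a_{0}-d_{0})(4(a_{0}^{2}+a_{0}d_{0}+d_{0}^{2})-3\mid a\mid^{2})$ using $Re(q^{3})=4q_{0}^{3}-3q_{0}\mid q\mid^{2}$ and $\mid a\mid=\mid d\mid$, while $Re(abc)-Re(bdc)=Re((ab-bd)c)$ reduces, again via (\ref{1.1}), to $(a_{0}-d_{0})(\mid c\mid^{2}+2Re(bc))$. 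Adding the two pieces yields $\rho_{3}(a_{0}-d_{0})$ with the same real scalar $\rho_{3}$. A short bookkeeping then gives $\rho_{3}=3(a_{0}+d_{0})^{2}-3-\triangle(T)$, and substituting $Tr(T)=2(a_{0}+d_{0})$ together with (\ref{Tr}) rewrites this as $\rho_{3}=\frac{1}{2}(Tr(T))^{2}+\frac{1}{2}Tr(T^{2})-1$, so that $\triangle(T^{3})=\rho_{3}^{2}\triangle(T)$ is exactly the claim.

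The main obstacle is the regrouping in the previous paragraph: whereas the $n=2$ case only used the two ``cross'' relations $\bar a b=\bar c d$ and $a\bar c=b\bar d$, the sandwiched term $bcb$ and the difference $Re(abc)-Re(bdc)$ genuinely involve all of $a,b,c,d$, so the modulus relations must be brought in and the algebra is considerably heavier. As an independent check one can sidestep the quaternion manipulations altogether: by Theorem~\ref{triT} one has $\triangle(T^{3})=\frac{1}{4}(Tr(T^{3}))^{2}-\frac{1}{2}Tr(T^{6})-2$, and since $Tr(T^{k})$ is the ordinary complex trace of the $k$-th power of the $4\times 4$ complex realisation of $T$ — a matrix lying in $U(2,2)$ with determinant $1$, hence with palindromic characteristic polynomial $\lambda^{4}-e_{1}\lambda^{3}+e_{2}\lambda^{2}-e_{1}\lambda+1$ where $e_{1}=Tr(T)$ and $e_{2}=\frac{1}{2}((Tr(T))^{2}-Tr(T^{2}))$ — Newton's identities and the resulting four-term linear recursion express $Tr(T^{3})$ and $Tr(T^{6})$ as explicit polynomials in $Tr(T)$ and $Tr(T^{2})$, turning the assertion into a polynomial identity in two variables.
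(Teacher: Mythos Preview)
Your main argument is correct and is essentially the paper's own proof: the paper also computes $T^{3}$ directly and shows, by the same add-and-subtract tricks with the relations (\ref{1.1}), that $B_{3}-\overline{C_{3}}$ equals the real scalar
\[
2Re(a^{2})+2Re(d^{2})+2Re(bc)+4a_{0}d_{0}+\mid b\mid^{2}+\mid d\mid^{2}
\]
times $b-\overline{c}$, and that $Re(A_{3})-Re(D_{3})$ equals the same scalar times $a_{0}-d_{0}$. A short simplification using $\mid a\mid^{2}=\mid d\mid^{2}=\mid c\mid^{2}+1$ and $\mid b-\overline{c}\mid^{2}=2\mid c\mid^{2}-2Re(bc)$ shows this scalar equals your $\rho_{3}=3(a_{0}+d_{0})^{2}-3-\triangle(T)$, so the two write-ups are the same computation organised slightly differently. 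Your framing in terms of a real multiplier $\rho_{n}$ with $\rho_{1}=1$, $\rho_{2}=Tr(T)$ is a helpful way to see Propositions~\ref{triT2} and~\ref{triT3} as instances of one phenomenon; the paper does not make this explicit.

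The genuinely new content in your proposal is the alternative check via the $4\times4$ complex realisation. This is a legitimately different route: once one observes that $\chi(T)\in U(2,2)\cap Sp(4,\mathbb{C})$ has a real palindromic characteristic polynomial $\lambda^{4}-e_{1}\lambda^{3}+e_{2}\lambda^{2}-e_{1}\lambda+1$, the power sums $p_{k}=Tr(T^{k})$ satisfy the four-term recursion $p_{k}=e_{1}p_{k-1}-e_{2}p_{k-2}+e_{1}p_{k-3}-p_{k-4}$, and the identity reduces to elementary algebra in $e_{1},e_{2}$. The advantage of this approach is that it avoids the heavy quaternionic regroupings and immediately suggests how to produce analogous identities for all $\triangle(T^{n})$; the advantage of the paper's direct approach is that it stays entirely inside $U(1,1;\mathbb{H})$ and requires no structural facts about the complex realisation. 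The two facts you need for the palindromic claim---real coefficients from the quaternionic structure $\chi(T)\sim\overline{\chi(T)}$, and $\lambda\leftrightarrow\lambda^{-1}$ symmetry from the symplectic structure---are standard but not stated in the paper, so if you use this route you should say a word about why they hold.
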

\begin{proof}
By a simple calculation, one can see
$$
T^{3}=\begin{bmatrix}
a^{3}+bca+abc+bdc & a^{2}b+bd^{2}+bcb+abd \\
ca^{2}+d^{2}c+dca+cbc & cbd+cab+dcb+d^{3}
\end{bmatrix}.
$$
Then,
\begin{align*}
&\mid a^{2}b+bd^{2}+bcb+abd-\overline{a}^{2}\overline{c}-\overline{c}\overline{d}^{2}-\overline{a}\cdot\overline{c}\overline{d}-\overline{c}\overline{b}\overline{c} \mid^{2}\\
=& |(a^{2}+\overline{a}^{2})(b-\overline{c})+(b-\overline{c})(d^{2}+\overline{d}^{2})+(bc+\overline{c}\overline{b})(b-\overline{c}) \\ &+(a+\overline{a})(b-\overline{c})(d+\overline{d})-a(b-\overline{c})\overline{d}-\overline{a}(b-\overline{c})d+ab\overline{d}+\overline{a}bd \\
&+b\mid c\mid^{2}+b\mid d\mid^{2}-\overline{a}\cdot\overline{c}d-a\overline{c}\overline{d}-\overline{c}\mid d\mid^{2}-\overline{c}\mid b\mid^{2} |^{2}\\
=&(2Re(a^{2})+2Re(d^{2})+2Re(bc) +4a_{0}d_{0}+\mid b\mid^{2}+\mid d\mid^{2})^{2}\cdot\mid b-\overline{c}\mid^{2}\\
=&\left(\frac{1}{2}(Tr(T))^{2}+\frac{1}{2}Tr(T^{2})-1\right)^{2}\cdot \mid b-\overline{c}\mid^{2},
\end{align*}
and
\begin{align*}
&(Re(a^{3}+bca+abc+bdc-d^{3}-cbd-cab-dcb))^{2}\\
=&(Re((a^{2}+bc)(a+\overline{a})-(d+\overline{d})(d^{2}+cb)-(a^{2}+bc)\overline{a}+\overline{d}(d^{2}+cb)))^{2} \\
=&(2Re(a^{2})+2Re(d^{2})+2Re(bc) +4a_{0}d_{0}+\mid b\mid^{2}+\mid d\mid^{2})^{2}\cdot (a_{0}-d_{0})^{2}\\
=&\left(\frac{1}{2}(Tr(T))^{2}+\frac{1}{2}Tr(T^{2})-1\right)^{2}\cdot (a_{0}-d_{0})^{2}.
\end{align*}
Hence
\[
\triangle(T^{3})=\left(\frac{1}{2}(Tr(T))^{2}+\frac{1}{2}Tr(T^{2})-1\right)^{2}\cdot\triangle(T).
\]
\end{proof}
According to Theorem \ref{triT}, Proposition \ref{triT2} and Proposition \ref{triT3}, one can obtain the following identities for $\triangle(T^{6})$.
\begin{corollary}\label{triT6}
Let $T=\begin{bmatrix}
a & b \\
c & d
\end{bmatrix}\in U(1,1;\mathbb{H})$. Then,
\begin{align*}
\triangle(T^{6})=
&\left(\frac{1}{2}(Tr(T^{2}))^{2}+\frac{1}{2}Tr(T^{4})-1\right)^{2}\cdot (Tr(T))^{2}\cdot \triangle(T)\\
=&\left(\frac{1}{2}(Tr(T))^{2}+\frac{1}{2}Tr(T^{2})-1\right)^{2}\cdot (Tr(T^{3}))^{2}\cdot \triangle(T).
\end{align*}
\end{corollary}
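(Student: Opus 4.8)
The plan is to obtain both identities by iterating the two earlier formulas, using only that $U(1,1;\mathbb{H})$ is a group, so that $T^{2}$ and $T^{3}$ again belong to $U(1,1;\mathbb{H})$ and Propositions \ref{triT2} and \ref{triT3} may be applied to them verbatim.

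For the first identity I would write $T^{6}=(T^{2})^{3}$ and apply Proposition \ref{triT3} with $T$ replaced by $T^{2}$, which gives
\[
\triangle(T^{6})=\left(\frac{1}{2}(Tr(T^{2}))^{2}+\frac{1}{2}Tr((T^{2})^{2})-1\right)^{2}\cdot\triangle(T^{2}).
\]
Then I would simplify $Tr((T^{2})^{2})=Tr(T^{4})$ and substitute $\triangle(T^{2})=(Tr(T))^{2}\cdot\triangle(T)$ from Proposition \ref{triT2}; this produces exactly the first displayed line. For the second identity I would instead write $T^{6}=(T^{3})^{2}$ and apply Proposition \ref{triT2} with $T$ replaced by $T^{3}$, obtaining $\triangle(T^{6})=(Tr(T^{3}))^{2}\cdot\triangle(T^{3})$, and then insert the formula for $\triangle(T^{3})$ furnished by Proposition \ref{triT3}. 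If desired, Theorem \ref{triT} applied to $T^{2}$ or $T^{3}$ lets one rewrite the bracketed factors purely through $\triangle$, but this is not needed for the statement.

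There is essentially no analytic obstacle: the content is simply the bookkeeping of composing the two earlier identities, the only genuine points being (i) the group closure $T^{2},T^{3}\in U(1,1;\mathbb{H})$ that licenses the substitutions, and (ii) keeping straight which traces occupy which slots (here $Tr((T^{2})^{2})=Tr(T^{4})$, and $Tr(T^{2})$ plays the role of $Tr(T)$ in Proposition \ref{triT3}). As a byproduct, equating the two resulting expressions for $\triangle(T^{6})$ shows that
\[
\left(\frac{1}{2}(Tr(T^{2}))^{2}+\frac{1}{2}Tr(T^{4})-1\right)^{2}(Tr(T))^{2}=\left(\frac{1}{2}(Tr(T))^{2}+\frac{1}{2}Tr(T^{2})-1\right)^{2}(Tr(T^{3}))^{2}
\]
whenever $\triangle(T)\neq 0$, hence for every $T\in U(1,1;\mathbb{H})$ by continuity — a trace identity that could also be verified directly.
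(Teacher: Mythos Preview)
Your proposal is correct and is exactly the argument the paper intends: the corollary is stated immediately after Propositions \ref{triT2} and \ref{triT3} with the remark that it follows from Theorem \ref{triT} and those two propositions, and your derivation via $T^{6}=(T^{2})^{3}$ and $T^{6}=(T^{3})^{2}$ spells this out. There is nothing to add.
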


\subsection{Spectra}

Now let us consider the spectra of quaternionic matrices. To simplify our discussion, it is useful of that every quaternionic matrix is similar to a complex matrix \cite{Wie55}. In fact, a Jordan canonical decomposition theorem of quaternionic matrices was obtained by Wiegmann \cite{Wie55}.

\begin{theorem}[Theorem $1$ in \cite{Wie55}]\label{RS1}
Every $n\times n$ matrix with real quaternion elements is similar
under a matrix transformation with real quaternion elements to a matrix in
(complex) Jordan normal form with diagonal elements of the form $a+\mathbf{i}b$, where $a,b\in\mathbb{R}$ and $b\ge 0$.
\end{theorem}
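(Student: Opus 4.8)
The plan is to transfer the problem to ordinary Jordan theory over $\mathbb{C}$ via the standard embedding $\psi\colon M_n(\mathbb{H})\hookrightarrow M_{2n}(\mathbb{C})$, $\psi(A_1+A_2\mathbf{j})=\begin{bmatrix} A_1 & A_2\\ -\overline{A_2} & \overline{A_1}\end{bmatrix}$ with $A_1,A_2\in M_n(\mathbb{C})$; this is an injective $\mathbb{R}$-algebra homomorphism preserving invertibility, and $\psi(A)$ is $\mathbb{C}$-similar to $\overline{\psi(A)}$, so $\chi(x):=\det{}_{\mathbb{C}}(xI_{2n}-\psi(A))\in\mathbb{R}[x]$. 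Since $\psi$ is an injective $\mathbb{R}$-algebra map, the Cayley--Hamilton identity $\chi(\psi(A))=0$ forces $\chi(A)=0$. First I would factor $\chi=\prod_{s}q_s^{m_s}$ into powers of distinct monic real irreducibles and run the primary decomposition inside the \emph{commutative} subring $\mathbb{R}[A]\subseteq M_n(\mathbb{H})$, obtaining an $A$-invariant decomposition of right $\mathbb{H}$-modules $\mathbb{H}^n=\bigoplus_s V_s$ with $q_s(A)^{m_s}|_{V_s}=0$. Thus it suffices to treat one primary block $A_s:=A|_{V_s}$, where $q_s$ is either $x-\mu$ with $\mu\in\mathbb{R}$, or $x^2-2ax+(a^2+b^2)$, in which case I write $\lambda:=a+\mathbf{i}b$ with $b>0$.

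For $q_s=x-\mu$ the operator $A_s-\mu I$ is nilpotent, and the usual Jordan theory of a single nilpotent operator on a finite-dimensional right vector space over a division ring exhibits $A_s$, after an $\mathbb{H}$-similarity, as $\mu I+N$ with $N$ a nilpotent matrix with entries in $\{0,1\}$ --- already a (real) Jordan matrix with diagonal entries $\mu=\mu+\mathbf{i}\cdot 0$. For $q_s=x^2-2ax+(a^2+b^2)$, a direct computation (using that $q_s$ has real coefficients and that $\mathbb{H}$ has no zero divisors) shows that every right eigenvalue $\nu$ of a right-linear $T$ with $q_s(T)^{m_s}=0$ satisfies $Re(\nu)=a$ and $|\nu|^2=a^2+b^2$, hence $\nu$ is similar to $\lambda$. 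I would then induct on $\dim_{\mathbb{H}}V_s$: picking a right eigenvector, replacing it by a right multiple so that its eigenvalue becomes $\lambda$, and extending to a basis gives $A_s\sim_{\mathbb{H}}\begin{bmatrix}\lambda & r\\ 0 & A_s'\end{bmatrix}$; reading off the $(2,2)$-block of $q_s\!\left(\begin{bmatrix}\lambda & r\\ 0 & A_s'\end{bmatrix}\right)^{m_s}=0$ shows $q_s(A_s')^{m_s}=0$, so by induction $A_s'\sim_{\mathbb{H}}C'$ with $C'$ a complex Jordan matrix all of whose eigenvalues equal $\lambda$. Conjugating by $\mathrm{diag}(1,\cdot)$ brings $A_s$ to $\begin{bmatrix}\lambda & r'\\ 0 & C'\end{bmatrix}$, and conjugating further by $\begin{bmatrix}1 & s\\ 0 & I\end{bmatrix}$ changes the $(1,2)$-block to $\lambda s-sC'+r'$, whose $\mathbf{j}$-part is $s_j(\lambda I-\overline{C'})+r'_j$ (writing $s=s_c+s_j\mathbf{j}$, $r'=r'_c+r'_j\mathbf{j}$); since $\overline{C'}$ has all eigenvalues $\overline{\lambda}\neq\lambda$, the matrix $\overline{C'}-\lambda I$ is invertible, so $s_j$ can be chosen to kill that $\mathbf{j}$-part. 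What remains is a complex matrix with all eigenvalues $\lambda$, hence $\mathbb{C}$-similar --- a fortiori $\mathbb{H}$-similar --- to a complex Jordan matrix with diagonal entries $\lambda=a+\mathbf{i}b$, $b>0$. Reassembling over the $V_s$ yields the asserted normal form.

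The genuine obstacle is the non-real primary block. One can always triangularize a quaternionic matrix with complex diagonal entries, but the strictly upper-triangular part stays essentially quaternionic, and the whole argument hinges on being able to ``complexify'' it --- which works \emph{precisely because} $\lambda\neq\overline{\lambda}$ makes $\overline{C'}-\lambda I$ invertible. The real-eigenvalue case, where that matrix is singular, genuinely requires separate treatment, and this is exactly why routing through the primary decomposition --- isolating the real spectrum in its own summand, where the division-ring nilpotent Jordan theory applies verbatim --- is the right organizing device. The remaining points (that $A$ satisfies a real polynomial, and that the primary decomposition is legitimate in spite of the noncommutativity of $M_n(\mathbb{H})$) are routine once one records that $\psi$ is an injective $\mathbb{R}$-algebra homomorphism and that $\mathbb{R}[A]$ is commutative.
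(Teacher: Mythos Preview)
The paper does not prove this statement at all: it is quoted verbatim as Theorem~1 of Wiegmann~\cite{Wie55} and then immediately used, so there is no ``paper's own proof'' to compare against. Your proposal is therefore a stand-alone proof of the cited result, and as such it is essentially correct. The organizing device---pulling back a real annihilating polynomial via the complex adjoint $\psi$, running the primary decomposition through the commutative subring $\mathbb{R}[A]$, and then treating the real and non-real primary pieces separately---is sound, and your key observation (that on a non-real primary block one can conjugate away the $\mathbf{j}$-component of the strictly upper-triangular part precisely because $\overline{C'}-\lambda I$ is invertible when $\lambda\neq\overline\lambda$) is exactly the crux of the quaternionic case. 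A couple of sign slips in the $(1,2)$-block computation are harmless, since you only need solvability of a linear system governed by $\overline{C'}-\lambda I$.

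If you want a comparison with Wiegmann's original 1955 argument: he also routes through the $2n\times 2n$ complex representation, but rather than doing the block-by-block upper-triangular clean-up you perform inside $M_n(\mathbb{H})$, he works with the complex Jordan form of $\psi(A)$ and exploits the symmetry $J\,\overline{\psi(A)}\,J^{-1}=\psi(A)$ to pair conjugate Jordan blocks and then descend. Your approach is more self-contained on the quaternionic side (you never need the fine structure of $\psi(A)$ beyond the existence of a real annihilating polynomial), at the cost of the extra inductive complexification step; Wiegmann's buys the Jordan structure wholesale from $M_{2n}(\mathbb{C})$ but must then argue carefully that the descent lands in $\psi(M_n(\mathbb{H}))$.
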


Then, we have the next conclusion according to the theorem above.

\begin{proposition}\label{prop5}
If $s\in\sigma_{S}(T)~(\sigma_{R}(T))$, then $qsq^{-1}\in\sigma_{S}(T)~(\sigma_{R}(T))$ for any nonzero quaternion $q\in\mathbb{H}$. Moreover, for any $T\in M_{n}(\mathbb{H})$, $\sigma_{S}(T)=\sigma_{R}(T)$.
\end{proposition}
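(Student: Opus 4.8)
The plan is to prove the two assertions in turn; the equality $\sigma_S(T)=\sigma_R(T)$ will then follow almost immediately from a single operator identity together with the conjugation invariance.

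\emph{Conjugation invariance.} For $\sigma_S$ this is essentially automatic: if $s\in\mathbb{H}$ and $q\in\mathbb{H}\setminus\{0\}$, then $qsq^{-1}$ has the same real part and the same modulus as $s$, so $T^{2}-2(qsq^{-1})_{0}T+\mid qsq^{-1}\mid^{2}\mathbf{I}$ is literally the same operator as $T^{2}-2s_{0}T+\mid s\mid^{2}\mathbf{I}$; hence $s\in\sigma_{S}(T)$ forces $qsq^{-1}\in\sigma_{S}(T)$. For $\sigma_{R}(T)$ I would use that in finite dimension $\sigma_{R}(T)=\sigma_{Rp}(T)$: if $\mathbf{v}\neq\mathbf{0}$ satisfies $T\mathbf{v}=\mathbf{v}s$, then $\mathbf{v}q^{-1}\neq\mathbf{0}$ and, by right linearity,
\[
T(\mathbf{v}q^{-1})=(T\mathbf{v})q^{-1}=\mathbf{v}sq^{-1}=(\mathbf{v}q^{-1})(qsq^{-1}),
\]
so $qsq^{-1}$ is again a right eigenvalue. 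In particular $\sigma_R(T)$ and $\sigma_S(T)$ are unions of quaternionic conjugacy classes.

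\emph{The key identity.} I claim that, as maps on $V$ (equivalently, as $\mathbb{R}$-linear maps on the underlying real vector space),
\[
T^{2}-2s_{0}T+\mid s\mid^{2}\mathbf{I}=(T-\mathbf{I}s)(T-\mathbf{I}\overline{s})=(T-\mathbf{I}\overline{s})(T-\mathbf{I}s).
\]
This is checked by direct expansion: applying $T-\mathbf{I}\overline{s}$ and then $T-\mathbf{I}s$ to a vector $\mathbf{v}$ yields $T^{2}\mathbf{v}-(T\mathbf{v})\overline{s}-(T\mathbf{v})s+\mathbf{v}(\overline{s}s)$, and the identities $s+\overline{s}=2s_{0}\in\mathbb{R}$ and $\overline{s}s=s\overline{s}=\mid s\mid^{2}\in\mathbb{R}$ turn this into $(T^{2}-2s_{0}T+\mid s\mid^{2}\mathbf{I})\mathbf{v}$; the symmetry $s\leftrightarrow\overline{s}$ of the right-hand side gives the other order for free. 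The only thing to watch here is bookkeeping with noncommutativity, together with the use of right linearity of $T$ so that $T(\mathbf{v}\overline{s})=(T\mathbf{v})\overline{s}$.

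\emph{Conclusion.} Since $T\in M_{n}(\mathbb{H})$, all the operators in play are $\mathbb{R}$-linear endomorphisms of a finite-dimensional real vector space, so a product of two of them is invertible if and only if each factor is. By the identity, $s\notin\sigma_{S}(T)$ iff both $T-\mathbf{I}s$ and $T-\mathbf{I}\overline{s}$ are invertible, i.e.
\[
\sigma_{S}(T)=\sigma_{R}(T)\cup\{\overline{s};\ s\in\sigma_{R}(T)\}.
\]
Finally $\overline{s}$ has the same real part and modulus as $s$, hence is quaternionic-conjugate to $s$; by the conjugation invariance of $\sigma_{R}(T)$ proved above, $\overline{s}\in\sigma_{R}(T)$ whenever $s\in\sigma_{R}(T)$, so the second set on the right is contained in $\sigma_{R}(T)$ and we conclude $\sigma_{S}(T)=\sigma_{R}(T)$. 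The main (and only) obstacle is really just the careful noncommutative expansion in the key identity, plus noticing that finite-dimensionality is essential when passing from ``$AB$ invertible'' to ``$A$ and $B$ invertible''. An alternative route, in the spirit of the sentence preceding the statement, would be to invoke Wiegmann's Theorem \ref{RS1} to replace $T$ by a complex matrix in Jordan form and then evaluate both spectra directly as the union of the quaternionic conjugacy classes of the diagonal entries; I prefer the factorization because it avoids the block-by-block case analysis and handles $\sigma_S$ and $\sigma_R$ simultaneously.
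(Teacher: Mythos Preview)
Your proof is correct. The conjugation-invariance part matches the paper's argument essentially verbatim. For the equality $\sigma_S(T)=\sigma_R(T)$, however, you take a genuinely different route: you establish the factorization $T^{2}-2s_{0}T+\mid s\mid^{2}\mathbf{I}=(T-\mathbf{I}s)(T-\mathbf{I}\overline{s})$ directly for the original quaternionic $T$ and an arbitrary $s\in\mathbb{H}$, regarding the factors as $\mathbb{R}$-linear maps on a finite-dimensional real space, and then exploit the conjugation invariance of $\sigma_R$ to absorb the $\overline{s}$-term. The paper instead first reduces to the intersection with $\mathbb{C}$, invokes Wiegmann's Theorem~\ref{RS1} to replace $T$ by a similar complex matrix $A$, and only then writes down the factorization $(A-s)(A-\overline{s})=A^{2}-2s_{0}A+\mid s\mid^{2}$ for complex $A$ and complex $s$---precisely the ``alternative route'' you mention at the end. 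Your direct approach is more self-contained (no Jordan form needed) and makes transparent exactly where right linearity and finite-dimensionality are used; the paper's route leans on a standard structure theorem but makes the factorization trivial once $A$ and $s$ are complex.
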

\begin{proof}
Suppose that $s\in \sigma_{R}(T)$. Then there is a nonzero vector $\mathbf{v}$ such that $T\mathbf{v}=\textbf{v}s$. For any nonzero quaternion $q\in\mathbb{H}$, we have
\[
T\textbf(\mathbf{v}q^{-1})=\mathbf{v}sq^{-1}=(\mathbf{v}q^{-1})(qsq^{-1}).
\]
Then, $qsq^{-1}\in\sigma_{R}(T)$. On the other hand, since $|s|=|qsq^{-1}|$ and $Re(s)=Re(qsq^{-1})$, it follows from the definition of $\sigma_{S}(T)$ that $qsq^{-1}\in\sigma_{S}(T)$.

Now let us show that $\sigma_{S}(T)=\sigma_{R}(T)$. By the above statements, it suffices to prove $\sigma_{S}(T)\bigcap\mathbb{C}=\sigma_{R}(T)\bigcap\mathbb{C}$.

By Theorem \ref{RS1}, there is a unitary matrix $U$ such that $A=U^{\ast}TU\in M_{n}(\mathbb{C})$. Then,
\[
(A-s)(A-\overline{s})=(A-\overline{s})(A-s)=A^{2}-2s_{0}A+\mid s\mid^{2},\ \text{for  any} \ s \in \mathbb{C}.
\]
Thus, for any $\mathbf{0}\neq \mathbf{v}\in \mathbb{H}^2$,
\begin{equation}\label{shi2}
(A^{2}-2s_{0}A+\mid s\mid^{2})\textbf{v}=(A-s)(A-\overline{s})\mathbf{v}=(A-\overline{s})(A-s)\mathbf{v}.
\end{equation}
It implies that for $s\in\mathbb{C}$, $s\in \sigma_{S}(T)$ if and only if $s\in \sigma_{R}(T)$. This finishes the proof.
\end{proof}

\begin{proposition}
Both $\sigma_{S}$ and $\sigma_{R}$ are similar invariants, but not $\sigma_{L}$.
\end{proposition}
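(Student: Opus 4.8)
The plan is to establish the two assertions separately: that $\sigma_S$ and $\sigma_R$ are $U(1,1;\mathbb{H})$-similarity invariants (indeed $GL_n(\mathbb{H})$-similarity invariants), and that $\sigma_L$ is not. For the positive part, I would argue directly from the definitions. Suppose $B = G^{-1}TG$ for some invertible $G$. If $s\in\sigma_R(T)$ then $T-\mathbf{I}s$ is not invertible, and one checks that $B-\mathbf{I}s = G^{-1}(T-\mathbf{I}s)G$; here the key point is that the operator $\mathbf{I}s$ of right multiplication by $s$ commutes with left multiplication by the (scalar-free) matrix $G$, so conjugation passes through cleanly. Hence $B-\mathbf{I}s$ is not invertible either, giving $\sigma_R(T)\subseteq\sigma_R(B)$, and symmetry yields equality. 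The same bookkeeping works for $\sigma_S$: since $B^2-2s_0 B + |s|^2\mathbf{I} = G^{-1}(T^2-2s_0T+|s|^2\mathbf{I})G$ (all coefficients $2s_0$ and $|s|^2$ being real, hence central), invertibility is preserved. Alternatively, invoke Proposition \ref{prop5} to note $\sigma_S=\sigma_R$ and only treat one of them.

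For the negative part, it suffices to exhibit a single matrix $T\in U(1,1;\mathbb{H})$ and a single $G\in U(1,1;\mathbb{H})$ with $\sigma_L(G^{-1}TG)\neq\sigma_L(T)$. The natural source of failure is that left eigenvalues do not transform nicely: if $T\mathbf{v}=s\mathbf{v}$ then $(G^{-1}TG)(G^{-1}\mathbf{v})$ need not be a left scalar multiple of $G^{-1}\mathbf{v}$, because $G^{-1}$ does not commute with left multiplication by $s$. I would pick a diagonal $T=\mathrm{diag}(a,d)$ with $a,d\in U(1,1;\mathbb{H})$-admissible entries and at least one of them non-real, say $a=\mathbf{i}$-related, so that $\sigma_L(T)$ is a small explicit set, then conjugate by a suitable $G\in U(1,1;\mathbb{H})$ (for instance a ``rotation'' mixing the two coordinates, or a diagonal $G=\mathrm{diag}(q,\bar q^{-1})$ with $q$ chosen so that $q^{-1}\mathbf{i}q\neq\mathbf{i}$ while $|q|$ keeps $G$ in the group) and compute $\sigma_L(G^{-1}TG)$ directly from a $2\times 2$ left-eigenvalue equation. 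Since left multiplication by a quaternion acting on $\mathbb{H}^2$ is an $\mathbb{R}$-linear but not $\mathbb{H}$-linear map, the left spectrum of a generic $2\times2$ quaternionic matrix can be computed by passing to the $8\times 8$ real (or $4\times 4$ complex) realization and looking at which $s$ make the realization of $T-s\mathbf{I}$ singular; this makes the computation mechanical.

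The main obstacle is the counterexample: one must choose $T$ and $G$ so that the computation of $\sigma_L(G^{-1}TG)$ is both tractable and genuinely different from $\sigma_L(T)$, while simultaneously verifying both $T\in U(1,1;\mathbb{H})$ and $G\in U(1,1;\mathbb{H})$ via the defining relations (\ref{1.1}). The constraint $|a|=|d|$, $|b|=|c|$, $|a|^2-|c|^2=1$ together with the two quaternionic identities $\overline{a}b=\overline{c}d$ and $a\overline{c}=b\overline{d}$ is restrictive enough that a carelessly chosen $G$ will fall outside the group; I expect the cleanest route is to take $T$ diagonal with $b=c=0$, so $|a|=|d|=1$ and $a,d$ unit quaternions, and $G$ also of a simple normalized form, then reduce the left-spectrum comparison to a statement about conjugation of a single unit quaternion, where $|q^{-1}\mathbf{i}q|=1$ but the imaginary direction changes. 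Once such a pair is in hand, the inequality $\sigma_L(G^{-1}TG)\neq\sigma_L(T)$ follows by exhibiting one quaternion in the symmetric difference, completing the proof.
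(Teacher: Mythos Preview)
Your proposal is correct and follows essentially the same route as the paper. For the positive part the paper argues via eigenvectors (using $\sigma_{Sp}=\sigma_S$ in finite dimensions) rather than invertibility, but this is the same computation; for the negative part the paper carries out exactly your suggested diagonal strategy with the explicit choice $T=\mathrm{diag}(\mathbf{i},\mathbf{j})$ and $X=\mathrm{diag}(1,-\mathbf{i})$, so that $XTX^{-1}=\mathrm{diag}(\mathbf{i},-\mathbf{j})$ and the left spectra $\{\mathbf{i},\mathbf{j}\}$ and $\{\mathbf{i},-\mathbf{j}\}$ visibly differ---no passage to an $8\times 8$ real realization is needed, since for a diagonal matrix the left spectrum is simply the set of diagonal entries.
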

\begin{proof}
Let $T, T'\in M_{n}(\mathbb{H})$. Assume that $T'$ is similar to $T$, that is, $T'=XTX^{-1}$ where $X$ is an invertible matrix in $M_{n}(\mathbb{H})$. If $s\in \sigma_{S}(T)$, then there is a nonzero vector $\textbf{v}$ such that $(T^{2}-2s_{0}T+\mid s\mid^{2}\mathcal{I})\textbf{v}=\textbf{0}$. Set $\textbf{v}'=X\textbf{v}\neq \textbf{0}$. Then it is obvious that
$$
((T')^{2}-2s_{0}T'+\mid s\mid^{2}\mathcal{I})\textbf{v}'=X(T^{2}-2s_{0}T+\mid s\mid^{2}\mathcal{I})\textbf{v}=\textbf{0},
$$
which implies $s\in \sigma_{S}(T')$. Therefore, $\sigma_{S}(T)=\sigma_{S}(T')$. By Proposition \ref{prop5}, we also have $\sigma_{R}(T)=\sigma_{R}(T')$.

However, $\sigma_{L}(T)$ is not a similar invariant. Here we give an example. Let
\[
T=\begin{bmatrix}
\mathbf{i} & 0 \\
0 & \mathbf{j}
\end{bmatrix}\in U(1,1;\mathbb{H}), \ \ \ X=\begin{bmatrix}
1 & 0 \\
0 & -\mathbf{i}
\end{bmatrix}
\]
and
\[
\widetilde{T}=XTX^{-1}=\begin{bmatrix}
\mathbf{i} & 0 \\
0 & -\mathbf{j}
\end{bmatrix}.
\]
Then
\[
\sigma_{L}(T)=\{ \mathbf{i},\ \mathbf{j} \}\neq
\sigma_{L}(\widetilde{T})=\{ \mathbf{i},\ -\mathbf{j} \}.
\]
\end{proof}

\begin{corollary}\label{cor1}
Given any integer $n$. For every $T\in M_{n}(\mathbb{H})$,
\[
\sigma_{R}(T^{n})=\sigma_{S}(T^{n})=\{ s^{n}:s\in\sigma_{S}(T)\}.
\]
However, it is not true for $\sigma_{L}(T^n)$.
\end{corollary}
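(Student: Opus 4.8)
The plan is to settle the $\sigma_R$ and $\sigma_S$ identities by reducing to a complex matrix and invoking the classical spectral mapping theorem, and then to refute the left‑sided version by one explicit $2\times 2$ example. First, for fixed $T\in M_n(\mathbb{H})$ the matrix $T^n$ again lies in $M_n(\mathbb{H})$, so Proposition \ref{prop5} applied to $T^n$ gives $\sigma_R(T^n)=\sigma_S(T^n)$ at once; the real content is the identity $\sigma_S(T^n)=\{s^n:s\in\sigma_S(T)\}$. By Theorem \ref{RS1} pick a unitary $U$ with $A:=U^{\ast}TU\in M_n(\mathbb{C})$. Then $A^n=U^{\ast}T^nU$, and since $\sigma_S=\sigma_R$ is a similar invariant (proved just above) we get $\sigma_S(T)=\sigma_S(A)$ and $\sigma_S(T^n)=\sigma_S(A^n)$; thus it suffices to prove $\sigma_S(A^n)=\{s^n:s\in\sigma_S(A)\}$ for $A\in M_n(\mathbb{C})$.

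Next I would reduce this to a statement inside $\mathbb{C}$. By Proposition \ref{prop5} the set $\sigma_S(A^n)$ is closed under $x\mapsto qxq^{-1}$, and so is $\{s^n:s\in\sigma_S(A)\}$ because $(qsq^{-1})^n=qs^nq^{-1}$; hence each set is a union of full quaternionic similarity classes, and two such unions are equal as soon as they meet $\mathbb{C}$ in the same set. For $s\in\mathbb{C}$ the factorisation $(A-s)(A-\bar s)=(A-\bar s)(A-s)=A^2-2s_0A+|s|^2\mathbf{I}_n$ used in Proposition \ref{prop5} shows that $A^2-2s_0A+|s|^2\mathbf{I}_n$ is singular exactly when $A-s$ or $A-\bar s$ is, i.e. when $s$ or $\bar s$ is an ordinary complex eigenvalue of $A$, so $\sigma_S(A)\cap\mathbb{C}=\operatorname{spec}_{\mathbb{C}}(A)\cup\overline{\operatorname{spec}_{\mathbb{C}}(A)}$. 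The classical complex spectral mapping theorem gives $\operatorname{spec}_{\mathbb{C}}(A^n)=\{\lambda^n:\lambda\in\operatorname{spec}_{\mathbb{C}}(A)\}$, whence
\[
\sigma_S(A^n)\cap\mathbb{C}=\{\lambda^n:\lambda\in\operatorname{spec}_{\mathbb{C}}(A)\}\cup\{\overline{\lambda^n}:\lambda\in\operatorname{spec}_{\mathbb{C}}(A)\}.
\]
On the other side, writing any $s$ in the similarity class of $\lambda=a+\mathbf{i}b$ as $s=a+bI_s$ with $I_s^2=-1$ and computing $s^n$ inside the commutative plane $\mathbb{R}+\mathbb{R}I_s$ shows that the set of all such $s^n$ is precisely the similarity class of $\lambda^n$; intersecting with $\mathbb{C}$ this matches the display, so the two sets agree.

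The step I expect to cost the most care is this last piece of bookkeeping: confirming that $\{s^n:s\in\sigma_S(T)\}$ really is a union of \emph{full} similarity classes — which relies on $\sigma_S(T)=\sigma_R(T)$ being conjugation‑closed — and that the $n$‑th power map sends the class of $\lambda$ onto the class of $\lambda^n$; both are elementary but are exactly where noncommutativity must not be mishandled. Finally, for the last assertion it is enough to produce one matrix for which the left identity fails. Take $n=2$ and $T=\begin{bmatrix}0&\mathbf{i}\\-\mathbf{i}&0\end{bmatrix}\in M_2(\mathbb{H})$. Then $T^2=\mathbf{I}_2$, so $\sigma_L(T^2)=\{1\}$; but $T\begin{bmatrix}1\\-\mathbf{k}\end{bmatrix}=\begin{bmatrix}\mathbf{j}\\-\mathbf{i}\end{bmatrix}=\mathbf{j}\begin{bmatrix}1\\-\mathbf{k}\end{bmatrix}$, so $\mathbf{j}\in\sigma_L(T)$ and hence $-1=\mathbf{j}^2\in\{s^2:s\in\sigma_L(T)\}$, while $-1\notin\sigma_L(T^2)$. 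Thus $\sigma_L(T^2)\neq\{s^2:s\in\sigma_L(T)\}$. The reason the argument above does not transfer to $\sigma_L$ is that $T$ need not commute with left multiplication by a left eigenvalue $s$, so $T\mathbf{v}=s\mathbf{v}$ does not force $T^n\mathbf{v}=s^n\mathbf{v}$.
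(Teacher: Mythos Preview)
Your argument is correct. The first half is a genuinely different route from the paper's: where the paper simply invokes the $S$-spectral mapping theorem from \cite{NFC} to obtain $\sigma_S(T^n)=\{s^n:s\in\sigma_S(T)\}$ and then appeals to Proposition~\ref{prop5}, you instead give a self-contained reduction. You conjugate $T$ to a complex matrix $A$ via Theorem~\ref{RS1}, use similarity invariance of $\sigma_S$, identify $\sigma_S(A)\cap\mathbb{C}$ with $\operatorname{spec}_{\mathbb{C}}(A)\cup\overline{\operatorname{spec}_{\mathbb{C}}(A)}$ through the factorisation $(A-s)(A-\bar s)$, and then transport the classical complex spectral mapping theorem back to $\mathbb{H}$ by checking that the $n$-th power map sends each quaternionic similarity class to a full similarity class. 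This is more elementary and avoids the external reference; the paper's version is terser but leans on the machinery of \cite{NFC}. One cosmetic point: Theorem~\ref{RS1} as stated only guarantees an invertible conjugating matrix, not a unitary one, but you only use similarity invariance anyway, so this is harmless (the paper makes the same slip in the proof of Proposition~\ref{prop5}).

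For the left-spectrum counterexample you also diverge: the paper uses $T=\begin{bmatrix}\sqrt{2}&\mathbf{i}\\-\mathbf{i}&\sqrt{2}\end{bmatrix}\in U(1,1;\mathbb{H})$ and computes both $\sigma_L(T)$ and $\sigma_L(T^2)$ explicitly as $2$-spheres, whereas your $T=\begin{bmatrix}0&\mathbf{i}\\-\mathbf{i}&0\end{bmatrix}$ with $T^2=\mathbf{I}_2$ is cleaner and the verification is immediate. The trade-off is that your example does not lie in $U(1,1;\mathbb{H})$ (since $|a|^2-|c|^2=-1$), while the paper deliberately chose one inside $U(1,1;\mathbb{H})$ to keep with its theme; for the corollary as stated over $M_n(\mathbb{H})$ your example is perfectly adequate.
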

\begin{proof}
Following from a particular case of the $S$-spectral mapping theorem (Theorem 4.12.5 in \cite{NFC} p155), one can see that $\sigma_{S}(T^{n})=\{ s^{n}:s\in\sigma_{S}(T)\}$. Then, by Proposition \ref{prop5}, the conclusion also holds for $\sigma_{R}(T^{n})$.

Here we show an example in $U(1,1;\mathbb{H})$ such that the conclusion is false for $\sigma_{L}(T^n)$. Let $T=\begin{bmatrix}
\sqrt{2} & \mathbf{i} \\
-\mathbf{i} & \sqrt{2}
\end{bmatrix}\in U(1,1;\mathbb{H})$. Then  $T^{2}=\begin{bmatrix}
3 & 2\sqrt{2}\mathbf{i} \\
-2\sqrt{2}\mathbf{i} & 3
\end{bmatrix}$ and consequently,
$$\sigma_{L}(T)=\{ s\in \mathbb{H}:s_{1}=0,\ (s_{0}-\sqrt{2})^{2}+s_{2}^{2}+s_{3}^{2}=1 \},$$
$$\sigma_{L}(T^{2})=\{ t\in \mathbb{H}:t_{1}=0,\ (t_{0}-3)^{2}+t_{2}^{2}+t_{3}^{2}=8 \}.$$
It is obvious that $\sigma_{L}(T^{2})\neq \{ s^{2}:s\in\sigma_{L}(T)\}$.
\end{proof}

In \cite{WJX04}, Cao et.al. have characterized the right eigenvalues of the elements in $U(1,1;\mathbb{H})$. For $T=\begin{bmatrix}
a & b \\
c & d
\end{bmatrix}\in U(1,1;\mathbb{H})$,

\noindent I. $b=\overline{c}=0$, then

(I.1) \ if $a_{0}=d_{0}$,  $\sigma_{R}(T)=\{ s\in\mathbb{H}:s_{0}=a_{0}=d_{0},\ \mid s\mid=1\}$,

(I.2) \ if $a_{0}\neq d_{0}$,  $\sigma_{R}(T)=\{ s\in\mathbb{H}:s_{0}=a_{0} \ \text{or} \ s_{0}=d_{0},\ \mid s\mid=1\}$.

\noindent II.  $b=\overline{c}\neq 0$, then

(II.1) \  if $d_{0}^{2}=1$,  $\sigma_{R}(T)=\{ s\in\mathbb{H}:s=s_{0}=d_{0}\}$,

(II.2) \  if $d_{0}^{2}>1$,  $\sigma_{R}(T)=\{ s\in\mathbb{H}:s=s_{0}=d_{0}\pm \sqrt{d_{0}^{2}-1}\}$,

(II.3) \  if $d_{0}^{2}<1$,  $\sigma_{R}(T)=\{ s\in\mathbb{H}:s_{0}=d_{0},\ \mid s\mid=1\}$.

\noindent III.  $b\neq\overline{c}\neq 0$, then

(III.1) \  if $\triangle=0$,  $\sigma_{R}(T)=\{ s\in\mathbb{H}:s_{0}=\frac{1}{2}Re(\overline{c}^{-1}b\overline{d}+d), \mid s\mid=1\}$,

(III.2) \  if $\triangle<0$,  $\sigma_{R}(T)=\{ s\in\mathbb{H}:s_{0}=\frac{1}{2}(Re(\overline{c}^{-1}b\overline{d}+d)\pm\sqrt{-\triangle}), \mid s\mid=1\}$,

(III.3) \ if $\triangle>0$,  ~$\sigma_{R}(T)=\{s\in\mathbb{H}:~s_{0}=\frac{1}{2}(Re(\overline{c}^{-1}b\overline{d}+d)+\sqrt{2X-2-\triangle}), \ |s|^{2}=X+\sqrt{X^{2}-1}\} \bigcup \{ s^{\prime}\in\mathbb{H}:s^{\prime}_{0}=\frac{1}{2}(Re(\overline{c}^{-1}b\overline{d}+d)-\sqrt{2X-2-\triangle}),\ \ \mid s^{\prime}\mid^{2}=X-\sqrt{X^{2}-1} \}$, where
\[
X=\frac{(Re(\overline{c}^{-1}b\overline{d}+d))^{2}+\triangle+\sqrt{((Re(\overline{c}^{-1}b\overline{d}+d))^{2}+\triangle-4)^{2}+16\triangle}}{4}>1.
\]

Furthermore, we could  write $\sigma_{R}(T)$ for $T\in U(1,1;\mathbb{H})$ in a unified form via $\triangle(T)$.
\begin{theorem}
Let $T=\begin{bmatrix}
a & b \\
c & d
\end{bmatrix}\in U(1,1;\mathbb{H})$, then
$$
\sigma_{R}(T)=\left\{s\in\mathbb{H}:
\begin{array}{cc}
s_{0}=\frac{1}{2}((a_{0}+d_{0})\pm\sqrt{2X'-2-\triangle(T)}) \\
\text{and}  \ \ \ \mid s\mid^{2}=X'\pm\sqrt{(X')^{2}-1}
\end{array}
\right\},
$$
where
{\small
\[
X'=\frac{(a_{0}+d_{0})^{2}+\triangle(T)+\sqrt{((a_{0}+d_{0})^{2}+\triangle(T)-4)^{2}+8\triangle(T)\left(sgn(\triangle(T))+1\right)}}{4}
\]}
and $sgn$ is the usual sign function on $\mathbb{R}$.
\end{theorem}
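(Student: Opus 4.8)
The plan is to verify the unified formula by reducing to the five cases already catalogued for $\sigma_R(T)$ just before the theorem, and checking that in each case the proposed $X'$ and the resulting description of $s_0$ and $|s|^2$ collapse to the stated answer. The crucial bookkeeping tool is the dictionary between the data $(a_0,d_0,\triangle(T))$ and the older data $(\operatorname{Re}(\overline{c}^{-1}b\overline{d}+d),\triangle)$: from the defining relations \eqref{1.1}, together with Theorem~\ref{triT} and the case list $(1)$--$(4)$ following Lemma~2.1, one has $a_0+d_0 = \operatorname{Re}(\overline{c}^{-1}b\overline{d}+d)$ whenever $b\neq \overline c\neq 0$ (this is essentially the computation in the proof of Lemma~2.1, reading $a_0=\operatorname{Re}(a)=\operatorname{Re}(\overline c^{-1}b\overline d)$ via $\overline a b=\overline c d$), and $\triangle(T)=\triangle$ in that case as well. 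So first I would record these identities explicitly as a short preliminary computation.

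Next I would run through the cases. The interesting case is III.3, $\triangle(T)>0$: here $sgn(\triangle(T))+1=2$, so the radicand under the outer square root in $X'$ becomes $((a_0+d_0)^2+\triangle(T)-4)^2+16\triangle(T)$, matching the old $X$ exactly once we substitute $a_0+d_0=\operatorname{Re}(\overline c^{-1}b\overline d+d)$ and $\triangle(T)=\triangle$; and $2X'-2-\triangle(T)=2X-2-\triangle$, so the two branches $s_0=\frac12((a_0+d_0)\pm\sqrt{2X'-2-\triangle(T)})$ with $|s|^2=X'\pm\sqrt{(X')^2-1}$ reproduce exactly the two families listed in III.3 (one checks the signs are correlated the same way). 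In the remaining four cases $\triangle(T)\le 0$, so $sgn(\triangle(T))+1\le 1$; I would observe that whenever $\triangle(T)<0$ the $+1$ still contributes, giving radicand $((a_0+d_0)^2+\triangle(T)-4)^2+8\triangle(T)\cdot 1$... here I must be careful: for $\triangle(T)<0$ we get $sgn+1=0$, so the outer radical is just $|(a_0+d_0)^2+\triangle(T)-4|$, hence $X'=\frac{(a_0+d_0)^2+\triangle(T)+|(a_0+d_0)^2+\triangle(T)-4|}{4}$. One then checks that $(a_0+d_0)^2+\triangle(T)\le 4$ in all the cases with $\triangle(T)\le 0$ (using $|a_0|,|d_0|\le 1$ type bounds coming from \eqref{1.1}; e.g. in I.1, I.2 one has $a_0+d_0=a_0+d_0$ with $|a|=|d|$, and in II.1--II.3 one has $a_0+d_0=2d_0$, etc.), so $X'=1$, whence $|s|^2=1$ and $s_0=\frac12((a_0+d_0)\pm\sqrt{2-\triangle(T)})$, i.e. $s_0=\frac12((a_0+d_0)\pm\sqrt{(a_0-d_0)^2-\ldots})$; comparing with I.1, I.2, II.3, III.1, III.2 (using $2-\triangle(T)=2-\mid b-\overline c\mid^2+(a_0-d_0)^2$, and in the split cases $\mid b-\overline c\mid^2=0$ or the relevant value) one recovers each listed formula, with the two branches coinciding precisely when $\triangle(T)=0$.

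The case II.1/II.2 ($b=\overline c\neq 0$, $d_0^2\ge 1$) deserves a separate look: there $\triangle(T)=0$ but the right spectrum is real (a two-point or one-point real set), not a $2$-sphere, yet the formula gives $|s|^2=1$... so here I would need to double-check that when $d_0^2\ge 1$ one actually has $a_0+d_0=2d_0$ forces $|s|=1$ to be consistent, i.e. that the sphere degenerates; in fact the entries force $|a|=|d|=1$ only when $b=\overline c=0$, so in II with $b=\overline c\neq 0$ we have $|a|=|d|>1$, and I expect the resolution is that the formula still outputs the correct $s_0$ while $|s|^2=1$ is forced together with $s_0=d_0\pm\sqrt{d_0^2-1}$ which pins $s$ to be real of that modulus — I would verify $|d_0\pm\sqrt{d_0^2-1}|=1$ is false in general, so \textbf{this is the step I expect to be the main obstacle}: reconciling the uniform $|s|^2=X'\pm\sqrt{(X')^2-1}$ prescription with the genuinely real (non-spherical) spectra in cases II.1 and II.2. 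The likely fix is that in those cases $X'>1$ after all — one must recompute $X'$ there without assuming $\triangle(T)=0$ kills the outer radical, i.e. track that $(a_0+d_0)^2+\triangle(T)=4d_0^2>4$, so $X'=\frac{4d_0^2+|4d_0^2-4|}{4}=2d_0^2-1>1$, giving $|s|^2=(2d_0^2-1)\pm 2d_0\sqrt{d_0^2-1}=(d_0\pm\sqrt{d_0^2-1})^2$ and $2X'-2-\triangle(T)=4d_0^2-4$, so $s_0=\frac12(2d_0\pm 2\sqrt{d_0^2-1})$, exactly matching II.2 (and II.1 when $d_0^2=1$). So the real content of the proof is precisely this careful case-by-case evaluation of the nested radical defining $X'$, paying attention to the sign of $(a_0+d_0)^2+\triangle(T)-4$ in each regime.
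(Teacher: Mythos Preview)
Your plan is exactly what the paper does---or rather, what the paper leaves implicit: the theorem is stated in the paper immediately after the Cao--Parker--Wang case list with no proof at all, so the intended argument is precisely the case-by-case verification you outline, and your computations for cases I, II (including your self-correction for $d_0^2\ge 1$, which is the only genuinely delicate point) and III.3 are correct.

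Two small repairs. First, when $X'=1$ you wrote $s_0=\tfrac12((a_0+d_0)\pm\sqrt{2-\triangle(T)})$; the radicand should be $2X'-2-\triangle(T)=-\triangle(T)$, not $2-\triangle(T)$. Second, and more substantively, your justification ``$|a_0|,|d_0|\le 1$ type bounds'' for $(a_0+d_0)^2+\triangle(T)\le 4$ in cases III.1 and III.2 does not work as stated, since for $T\in U(1,1;\mathbb H)$ one has $|a|=|d|\ge 1$, not $\le 1$. The clean fix is to read the inequality off the known spectrum you are matching against: in III.1 and III.2 the listed eigenvalues satisfy $|s|=1$ and $s_0=\tfrac12((a_0+d_0)\pm\sqrt{-\triangle(T)})$, so $|s_0|\le 1$ forces $|a_0+d_0|+\sqrt{-\triangle(T)}\le 2$, hence $(a_0+d_0)^2\le (2-\sqrt{-\triangle(T)})^2\le 4-\triangle(T)$, which is exactly the bound you need to conclude $X'=1$. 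With these two corrections your verification goes through in every case.
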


\section{Diagonalization of elliptic elements in $U(1,1;\mathbb{H})$}
In this section, we show that every matrix $T\in U(1,1;\mathbb{H})$ corresponding to an elliptic quaternionic M\"{o}bius  transformation $g_{T}$ is $U(1,1;\mathbb{H})$-similar to a diagonal matrix in $U(1,1;\mathbb{H})$. Notice that it is not a trivial result because of the lack of commutativity of quaternion multiplication and the restriction of the similar transformation matrix in $U(1,1;\mathbb{H})$.

\begin{theorem}\label{M1}
Let $T=\begin{bmatrix}
a & b \\
c & d
\end{bmatrix}$ be an matrix in $U(1,1;\mathbb{H})$. If the corresponding quaternionic M\"{o}bius  transformation $g_T(z)=(az+b)(cz+d)^{-1}$ is elliptic, then there exists a matrix $X \in U(1,1;\mathbb{H})$ such that $XTX^{-1}$ is a diagonal matrix.
\end{theorem}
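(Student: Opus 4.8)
The plan is to conjugate $T$ into diagonal form by switching to a $J$-orthonormal right eigenbasis of $\mathbb{H}^{1,1}$; the subtlety, compared with ordinary diagonalization over $\mathbb{H}$, is to guarantee that the change-of-basis matrix actually lands in $U(1,1;\mathbb{H})$. The crucial first step is to produce a right eigenvector of $T$ that is \emph{not} null for the form, i.e.\ a vector $\mathbf{v}$ with $T\mathbf{v}=\mathbf{v}s$ and $\langle\mathbf{v},\mathbf{v}\rangle\neq 0$; this is exactly where ellipticity enters. An elliptic $g_T$ fixes a point $z_0$ of the open ball $\mathbb{B}$ (see \cite{WJX04,ZC2015}; when $b=\overline{c}=0$ one may just take $z_0=0$, since then $g_T(z)=azd^{-1}$). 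A direct check shows that $g_T(z_0)=z_0$, that is, $az_0+b=z_0(cz_0+d)$, is equivalent to the column vector $\mathbf{v}$ with entries $(z_0,1)$ being a right eigenvector of $T$ with eigenvalue $s=cz_0+d$; and since $|z_0|<1$ we get $\langle\mathbf{v},\mathbf{v}\rangle=|z_0|^2-1<0$. One also notes, from $T^{\ast}JT=J$, $T\mathbf{v}=\mathbf{v}s$ and $\langle\mathbf{v},\mathbf{v}\rangle\in\mathbb{R}$, that $\langle\mathbf{v},\mathbf{v}\rangle=\langle T\mathbf{v},T\mathbf{v}\rangle=\overline{s}\langle\mathbf{v},\mathbf{v}\rangle s=|s|^{2}\langle\mathbf{v},\mathbf{v}\rangle$, so $|s|=1$; this is not needed but shows the eventual diagonal entries have modulus one.

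Next I would complete $\mathbf{v}$ to a frame. Rescaling by the positive real $(1-|z_0|^2)^{-1/2}$ gives a vector $\mathbf{e}_2$ with $\langle\mathbf{e}_2,\mathbf{e}_2\rangle=-1$ and $T\mathbf{e}_2=\mathbf{e}_2 s$. Let $W=\{\mathbf{w}\in\mathbb{H}^{1,1}:\langle\mathbf{w},\mathbf{e}_2\rangle=0\}$. Since $\mathbf{e}_2$ is non-null, $\mathbf{e}_2$ together with any nonzero vector of $W$ forms an $\mathbb{H}$-basis of $\mathbb{H}^{1,1}$, so $W$ has $\mathbb{H}$-dimension one, and by Sylvester's law of inertia (the form having signature $(1,1)$) the restriction of $\langle\cdot,\cdot\rangle$ to $W$ is positive definite. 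From $T^{-1}\mathbf{e}_2=\mathbf{e}_2 s^{-1}$ together with $\langle T\mathbf{x},T\mathbf{y}\rangle=\langle\mathbf{x},\mathbf{y}\rangle$ one checks that $W$ is $T$-invariant; being one-dimensional, any nonzero $\mathbf{w}_0\in W$ is then a right eigenvector, say $T\mathbf{w}_0=\mathbf{w}_0 r$, and after rescaling we obtain $\mathbf{e}_1$ with $\langle\mathbf{e}_1,\mathbf{e}_1\rangle=1$, $\langle\mathbf{e}_1,\mathbf{e}_2\rangle=0$ and $T\mathbf{e}_1=\mathbf{e}_1 r$. Now let $P$ be the matrix with columns $\mathbf{e}_1,\mathbf{e}_2$. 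It is invertible, and $P^{\ast}JP=\mathrm{diag}(1,-1)=J$, so $P\in U(1,1;\mathbb{H})$; moreover $TP$ has columns $\mathbf{e}_1 r$ and $\mathbf{e}_2 s$, i.e.\ $TP=P\,\mathrm{diag}(r,s)$, hence $P^{-1}TP=\mathrm{diag}(r,s)$. Since $U(1,1;\mathbb{H})$ is a group, $X:=P^{-1}\in U(1,1;\mathbb{H})$ and $XTX^{-1}=\mathrm{diag}(r,s)$ is diagonal, as required.

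I expect the main obstacle to be the first step, the existence of a non-null right eigenvector: this is precisely what separates elliptic from loxodromic and parabolic behaviour (a loxodromic or parabolic $g_T$ has fixed points only on $\partial\mathbb{B}$, giving null eigenvectors, and a parabolic matrix is not even diagonalizable over $\mathbb{H}$). Everything afterwards is routine quaternionic linear algebra, the one thing to watch being that all quaternionic scalars sit on the correct side. Should one wish to avoid quoting the geometric description of ellipticity, the non-null eigenvector can instead be produced case by case from the explicit formulas for $\sigma_R(T)$ in types II.1 and III.1 recalled in Section 2, by solving $av_1+bv_2=v_1 s$, $cv_1+dv_2=v_2 s$ and checking that a solution with $|v_1|\neq|v_2|$ exists; this is correct but more computational, and the interior fixed point supplies it at once.
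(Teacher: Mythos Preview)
Your argument is correct and takes a genuinely different route from the paper's proof. The paper proceeds by an explicit case-by-case construction: Case~1 ($b=\overline{c}=0$) is trivial; Case~2 ($b=\overline{c}\neq 0$, $d_0^2<1$) is handled by an explicit product of three $U(1,1;\mathbb{H})$ matrices $Z Y X$; and Case~3 ($b\neq\overline{c}\neq 0$, $\triangle(T)<0$) occupies most of the work, solving the system $XT=T'X$ for the entries $x_1,\dots,x_4$ directly and verifying, through three technical Claims A, B, C (each a page of quaternionic identities using $\overline{a}b=\overline{c}d$, $a\overline{c}=b\overline{d}$, etc.), that the resulting $X$ satisfies all the $U(1,1;\mathbb{H})$ relations. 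Your proof bypasses all of this by invoking the geometric characterisation of ellipticity (existence of an interior fixed point $z_0\in\mathbb{B}$) to produce at once a \emph{non-null} right eigenvector $\mathbf{v}=(z_0,1)^{\top}$, and then running the standard indefinite-unitary diagonalisation: normalise, pass to the $J$-orthogonal complement (one-dimensional, positive, $T$-invariant), and read off $P\in U(1,1;\mathbb{H})$ from the resulting $J$-orthonormal eigenbasis. What you gain is brevity and conceptual clarity, and a proof that makes the role of ellipticity transparent (it is exactly the condition that an eigenvector be non-null). What the paper's approach buys is complete explicitness: one obtains closed formulas for the conjugating matrix in terms of $a,b,c,d$, and as a by-product the finer statement in the Remark following the theorem, namely that the two diagonal forms $\mathrm{diag}(s,s')$ and $\mathrm{diag}(s',s)$ are \emph{not} $U(1,1;\mathbb{H})$-conjugate and that the sign of $a_0-d_0$ determines which one is attained.
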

\begin{proof}
Notice that the matrix $T\in U(1,1;\mathbb{H})$ corresponding to an elliptic quaternionic M\"{o}bius transformation satisfies one of the following conditions.
\begin{enumerate}
\item \ $b=\overline{c}=0$.
\item \ $b=\overline{c}\neq 0$ and  $d_{0}^{2}<1$.
\item \ $b\neq \overline{c}\neq 0$ and $\triangle(T)<0$.
\end{enumerate}

\noindent {\bf Case $1$:} $b=\overline{c}=0$.

In this case, the matrix $T$ itself is diagonal.

\noindent {\bf Case $2$:} $b=\overline{c}\neq 0$ and  $d_{0}^{2}<1$.

In this case, we could write
\[
T=\begin{bmatrix}
a & \overline{c} \\
c & d
\end{bmatrix}.
\]
Let
\[
X=\begin{bmatrix}
\overline{q} & 0 \\
0 & 1
\end{bmatrix}\in U(1,1;\mathbb{H}),
\]
where $\overline{q}=\frac{c}{\mid c\mid}=\mid c\mid\cdot\overline{c}^{-1}=q^{-1}$. Then
$$
XTX^{-1}=\begin{bmatrix}
\overline{d} & \mid c\mid \\
\mid c\mid & d
\end{bmatrix},
$$
Furthermore, let $y_{1}$ be the unimodular quaternion such that
\[
y_{1}\overline{d}\overline{y_{1}}=d_{0}+\mathbf{i}\sqrt{1-d_{0}^{2}+\mid c\mid^{2}}
 \]
which is equivalent to
\[
y_{1}d\overline{y_{1}}=d_{0}-\mathbf{i}\sqrt{1-d_{0}^{2}+\mid c\mid^{2}}.
\]
Set
\[
Y=\begin{bmatrix}
y_{1} & 0 \\
0 & y_{1}
\end{bmatrix}\in U(1,1;\mathbb{H}).
\]
Then,
\[
YXTX^{-1}Y^{-1}=\begin{bmatrix}
d_{0}+\mathbf{i}\sqrt{1-d_{0}^{2}+\mid c\mid^{2}} & \mid c\mid \\
\mid c\mid & d_{0}-\mathbf{i}\sqrt{1-d_{0}^{2}+\mid c\mid^{2}}
\end{bmatrix}.
\]

Now, we set
\[
Z=\begin{bmatrix}
k(\sqrt{1-d_{0}^{2}}+\sqrt{1-d_{0}^{2}+\mid c\mid^{2}}) & -k\mid c\mid\mathbf{i} \\
k\mid c\mid\mathbf{i} & k(\sqrt{1-d_{0}^{2}}+\sqrt{1-d_{0}^{2}+\mid c\mid^{2}})
\end{bmatrix},
\]
where $k^{2}=\frac{1}{2-2d_{0}^{2}+2\sqrt{1-d_{0}^{2}}\cdot\sqrt{1-d_{0}^{2}+\mid c\mid^{2}}}$. Then, $Z\in U(1,1;\mathbb{H})$ and
\[
ZYXTX^{-1}Y^{-1}Z^{-1}=\begin{bmatrix}
d_{0}+\mathbf{i}\sqrt{1-d_{0}^{2}} & 0 \\
0 & d_{0}-\mathbf{i}\sqrt{1-d_{0}^{2}}
\end{bmatrix}.
\]

\noindent {\bf Case $3$:} $b\neq \overline{c}\neq 0$ and $\triangle(T)<0$.

In this case,
\[
\sigma_{R}(T)=\{ s\in\mathbb{H}:s_{0}=\frac{1}{2}((a_{0}+d_{0})\pm\sqrt{-\triangle(T)}),\ \mid s\mid^{2}=1 \}.
\]
Choose
\[
s=s_{0}+\mathbf{i}\sqrt{1-s_{0}^{2}} \ \ \text{and} \ \ s'=s_{0}^{\prime}+\mathbf{i}\sqrt{1-s_{0}^{\prime2}}
\]
as the representation elements of the two connected components of $\sigma_{R}(T)$ respectively, where $Re(s)=s_{0}=\frac{1}{2}(a_{0}+d_{0}+\sqrt{-\triangle(T)})$ and $Re(s')=s_{0}^{\prime}=\frac{1}{2}(a_{0}+d_{0}-\sqrt{-\triangle(T)})$.

Let
\[
T^{\prime}=\left\{\begin{array}{cc}
\begin{bmatrix}
s & 0 \\
0 & s'
\end{bmatrix} & \text{if} \ a_0>d_0, \\
\begin{bmatrix}
s' & 0 \\
0 & s
\end{bmatrix} & \text{if} \ a_0<d_0.
\end{array}\right.
\]
Now suppose that $a_0>d_0$. We aim to prove that there exists a matrix
\[
X=\begin{bmatrix}
x_{1} & x_{2} \\
x_{3} & x_{4}
\end{bmatrix}\in U(1,1;\mathbb{H})
\]
such that $XT=T^{\prime}X$, that is, there exist $x_{n},\ n=1,2,3,4$ satisfying (\ref{1.1}) and the following equations

\begin{numcases}{}
x_{1}a+x_{2}c=s x_{1} \label{3.1}\\
x_{1}b+x_{2}d=s x_{2} \label{3.2}\\
x_{3}a+x_{4}c=s'x_{3} \label{3.3}\\
x_{3}b+x_{4}d=s'x_{4} \label{3.4}
\end{numcases}

Next, we will find the solution of those equations.

It follows from equation (\ref{3.1}) that
\[
x_2=(sx_1-x_1a)\cdot c^{-1}.
\]
Substitute it into equation (\ref{3.2}), then
\[
x_1b+(sx_1-x_1a)c^{-1}d=s(sx_1-x_1a) c^{-1},
\]
and consequently,
\[
x_1b|c|^2+(sx_1-x_1a)\overline{c}d=(s^2x_1-sx_1a) \overline{c}.
\]
According to $T\in U(1,1;\mathbb{H})$ and $\overline{s}s=|s|^2=1$, we have
\begin{align*}
0=&s^2x_1\overline{c}-sx_1a\overline{c}-x_1b|c|^2-sx_1\overline{c}d+x_1a\overline{c}d \\
=& s^2x_1\overline{c}-sx_1a\overline{c}-x_1b|c|^2-sx_1\overline{c}d+x_1b\overline{d}d \\
=& s^2x_1\overline{c}-sx_1a\overline{c}-sx_1\overline{c}d+x_1b \\
=& s^2x_1\overline{c}-sx_1b\overline{d}-sx_1\overline{c}d+x_1b-x_1\overline{c}+\overline{s}sx_1\overline{c} \\
=&(s+\overline{s})sx_1\overline{c}-sx_1b\overline{d}-sx_1\overline{c}d+x_1(b-\overline{c}) \\
=& sx_1(2s_0\overline{c}-b\overline{d}-\overline{c}d)+x_1(b-\overline{c}).
\end{align*}
For convenience, let $p=2s_{0}\overline{c}-b\overline{d}-\overline{c}d$. Notice that

\begin{align*}
\mid p\mid^{2}-\mid b-\overline{c}\mid^{2}=& 4s_{0}^{2}\mid b\mid^{2}+2\mid b\mid^{2}\mid d\mid^{2}-4s_{0}\cdot Re(\overline{c}d\overline{b})-4s_{0}\cdot Re(\overline{c}\overline{d}c) \\
&+2Re(b\overline{d}\cdot\overline{d}c)-2\mid b\mid^{2}+2Re(bc) \\
=&(a_{0}+d_{0}+\sqrt{-\triangle(T)})^{2}\mid b\mid^{2}+2\mid b\mid^{2}\mid d\mid^{2}-4s_{0}\mid b\mid^{2}a_{0} \\
&-4s_{0}\mid b\mid^{2}d_{0}+2Re(b\overline{d}^{2}c)-2\mid b\mid^{2}+2Re(bc)   \\
=&(a_{0}+d_{0})^{2}\mid b\mid^{2}+2(a_{0}+d_{0})\sqrt{-\triangle(T)}\mid b\mid^{2}-\triangle(T)\mid b\mid^{2} \\
&+2\mid b\mid^{4}-4s_{0}\mid b\mid^{2}(a_{0}+d_{0})+2Re(b\overline{d}^{2}c)+2Re(bc)   \\
=&(a_{0}+d_{0})^{2}\mid b\mid^{2}+2(a_{0}+d_{0})(2s_0-(a_{0}+d_{0}))\mid b\mid^{2} \\
&-(\mid b-\overline{c}\mid^{2}-(a_{0}-d_{0})^{2})\mid b\mid^{2}+2\mid b\mid^{4} \\
&-4s_{0}\mid b\mid^{2}(a_{0}+d_{0})+2Re(b\overline{d}^{2}c)+2Re(bc)   \\
=&  -4a_{0}d_{0}\mid b\mid^{2}+\mid b\mid^{2}(2\mid b\mid^{2}-\mid b-\overline{c}\mid^{2})+2Re(bc) \\
&+2Re(b\overline{d}^{2}c) \\
=&  -4a_{0}d_{0}\mid b\mid^{2}+2(\mid b\mid^{2}+1)Re(bc)+2Re(b\overline{d}^{2}c) \\
=& -4a_{0}d_{0}\mid b\mid^{2}+2Re(b\overline{d}\cdot d \cdot c+b\overline{d}\cdot\overline{d}\cdot c)   \\
=&  -4a_{0}d_{0}\mid b\mid^{2}+4d_{0}\cdot Re(a\overline{c}c)   \\
=&  0.
\end{align*}
Following from $sx_1p+x_1(b-\overline{c})=0$, we have
\begin{equation}\label{x_{1}}
\mid b-\overline{c}\mid^{2}sx_{1}=-x_{1}(b-\overline{c})\overline{p}.
\end{equation}
Since
$$
\mid \mid b-\overline{c}\mid^{2}s\mid=\mid-(b-\overline{c})\overline{p}\mid \ \  \text{and} \ \ Re(\mid b-\overline{c}\mid^{2}s)=Re(-(b-\overline{c})\overline{p})=s_{0}\mid b-\overline{c}\mid^{2},
$$
there exists a nonzero quaternion $x_{1}$ such that the equation (\ref{x_{1}}) holds. Furthermore, one can obtain
\begin{equation}\label{x_{2}=}
x_{2}=(sx_1-x_1a)\cdot c^{-1}=-x_{1}((b-\overline{c})p^{-1}+a)c^{-1}.
\end{equation}

{\bf Claim A.} If $a_0>d_0$, then $|((b-\overline{c})p^{-1}+a)c^{-1}|<1$.

It suffices to prove
\[
|(b-\overline{c})+ap|^2<|cp|^2.
\]
Since $b\neq\overline{c}$,
\[
(d_{0}-a_{0})^2+\triangle(T)=\mid b-\overline{c}\mid^2>0.
\]
Consequently, by $a_0>d_0$ and $\triangle(T)<0$, we have
\[
(d_{0}-a_{0})+\sqrt{-\triangle(T)}<0.
\]
Notice that
\begin{align}
&Re((b-\overline{c})\bar{p}\bar{a}) \nonumber\\
=&Re(2s_{0}bc\overline{a}-bd\overline{b}\overline{a}-\mid b\mid^{2}\mid d\mid^{2}-2s_{0}\mid c\mid^{2}\overline{a}+\mid b\mid^{2}\overline{a}^{2}+\mid d\mid^{2}\overline{c}\overline{b}) \nonumber\\
=&Re(2s_{0}\overline{a}bc-\overline{a}bd\overline{b}-\mid b\mid^{2}\mid d\mid^{2}-2s_{0}\mid c\mid^{2}\overline{a}+\mid b\mid^{2}\overline{a}^{2}+\mid d\mid^{2}\overline{c}\overline{b})\nonumber\\
=&Re(2s_{0}\overline{c}dc)-Re(\overline{c}dd\overline{b}+\overline{c}d\overline{d}\overline{b})+Re(\overline{c}d\overline{d}\cdot\overline{b}+\mid d\mid^{2}\overline{c}\overline{b})-\mid b\mid^{2}\mid d\mid^{2} \nonumber\\
&-Re(2s_{0}\mid c\mid^{2}\overline{a})+Re(\mid b\mid^{2}\overline{a}^{2})\nonumber\\
=&2s_{0}\mid c\mid^{2}d_{0}-Re(\overline{c}d(d+\overline{d})\overline{b})+2Re(\mid d\mid^{2}\overline{c}\overline{b})-\mid b\mid^{2}\mid d\mid^{2}-2s_{0}\mid c\mid^{2}a_{0}\nonumber\\
&+\mid b\mid^{2}(2a_{0}^{2}-\mid a\mid^{2})\nonumber\\
=&2s_{0}\mid b\mid^{2}d_{0}-2s_{0}\mid b\mid^{2}a_{0}-2\mid b\mid^{2}a_{0}d_{0}+2\mid b\mid^{2}a_{0}^{2}-2\mid b\mid^{2}\mid d\mid^{2} \nonumber\\
&+2\mid d\mid^{2}Re(\overline{c}\overline{b}) \nonumber\\
=&2\mid b\mid^{2}(s_{0}-a_{0})(d_{0}-a_{0})-\mid d\mid^{2}(2\mid b\mid^{2}-2Re(\overline{c}\overline{b})) \nonumber\\
=&\mid b\mid^{2}((d_{0}-a_{0})^2+(d_{0}-a_{0})\sqrt{-\triangle(T)})-\mid d\mid^{2}\mid b-\overline{c}\mid^2 \nonumber\\
=&\frac{1}{2}\mid b\mid^{2}((d_{0}-a_{0})^2+2(d_{0}-a_{0})\sqrt{-\triangle(T)}+(d_{0}-a_{0})^2-\mid b-\overline{c}\mid^2) \nonumber\\
&+\frac{1}{2}\mid b\mid^{2}\mid b-\overline{c}\mid^2-\mid d\mid^{2}\mid b-\overline{c}\mid^2 \nonumber\\
=&\frac{1}{2}\mid b\mid^{2}((d_{0}-a_{0})+\sqrt{-\triangle(T)})^2-(\frac{1}{2}\mid b\mid^{2}+1)\mid b-\overline{c}\mid^2. \label{3.7}
\end{align}
Then,
\begin{align}
&|(b-\overline{c})+ap|^2-|cp|^2 \nonumber\\
=&|b-\overline{c}|^2+|ap|^2+2Re((b-\overline{c})\overline{p}\overline{a})-|cp|^2 \nonumber\\
=&2(Re((b-\overline{c})\overline{p}\overline{a})+|b-\overline{c}|^2) \nonumber\\
=&\mid b\mid^{2}\left(((d_{0}-a_{0})+\sqrt{-\triangle(T)})^2-\mid b-\overline{c}\mid^2\right)\nonumber\\
=&2\mid b\mid^{2}\left((d_{0}-a_{0})\sqrt{-\triangle(T)}-\triangle(T)\right)\label{3.8} \\
<&0.\nonumber
\end{align}
This proves the Claim A.

Notice that if a nonzero quaternion $x_{1}$ satisfies the equation (\ref{x_{1}}), then for any $t\in\mathbb{R}\setminus\{0\}$, $tx_{1}$ is also a nonzero quaternion satisfying the equation (\ref{x_{1}}). So, together with (\ref{x_{2}=}) and Claim A, we could choose $x_{1}$ such that not only the equation (\ref{x_{1}}) holds, but also $|x_1|^2-|x_2|^2=1$.

Similarly, let $p'=2s_{0}^{\prime}\overline{c}-b\overline{d}-\overline{c}d$, one can also see that
\[
\mid p'\mid^{2}=\mid b-\overline{c}\mid^{2}
\]
We could obtain a nonzero quaternion $x_{3}$ such that the following equation holds,
\begin{equation}\label{x_{3}}
\mid b-\overline{c}\mid^{2}s_{2}x_{3}=-x_{3}(b-\overline{c})\overline{p'}.
\end{equation}
Moreover, we put
\begin{equation}\label{x_{4}=}
 x_{4}=-x_{3}((b-\overline{c})(p')^{-1}+a)c^{-1}.
\end{equation}
Similar to the previous discussion, we could obtain

{\bf Claim B.} If $a_0>d_0$, then $|((b-\overline{c})(p')^{-1}+a)c^{-1}|>1$.

Then, together with (\ref{x_{4}=}) and Claim B, we could choose $x_{3}$ such that not only the equation (\ref{x_{3}}) holds, but also $|x_4|^2-|x_3|^2=1$.

Now we have obtained $x_{n},\ n=1,2,3,4$, which satisfies equations (\ref{3.1})-(\ref{3.4}), $|x_1|^2-|x_2|^2=1$ and $|x_4|^2-|x_3|^2=1$. To complete the proof, it remains to show $x_{1}\overline{x_{3}}=x_{2}\overline{x_{4}}$, $\overline{x_{1}}x_{2}=\overline{x_{3}}x_{4}$ and $|x_{1}|=|x_{4}|$. The following claim plays an important role in the remaining proof.

{\bf Claim C.}
\[
((b-\overline{c})p^{-1}+a)c^{-1}\cdot\overline{((b-\overline{c})(p')^{-1}+a)c^{-1}}=1.
\]

The Claim C is equivalent to
$$
((b-\overline{c})p^{-1}+a)\cdot (\overline{p'}^{-1}(\overline{b}-c)+\overline{a})=\mid c\mid^{2},
$$
that is,
\begin{equation}\label{gs1}
((b-\overline{c})\overline{p}+a|p|^2)(p'(\overline{b}-c)+\overline{a}|p'|^2)-|c|^2 \mid b-\overline{c}\mid^{4}=0.
\end{equation}

Since $p-p'=2(s_0-s'_0)\overline{c}=2\sqrt{-\triangle(T)}\overline{c}$, it follows from (\ref{3.8}) that
\begin{align}
&((b-\overline{c})\overline{p}+a|p|^2)(p'(\overline{b}-c)+\overline{a}|p'|^2)-|c|^2 \mid b-\overline{c}\mid^{4} \nonumber\\
=&((b-\overline{c})\overline{p}+a|p|^2)\overline{((b-\overline{c})\overline{p}+a|p|^2)}-|c|^2 \mid b-\overline{c}\mid^{4} \nonumber\\
 &-((b-\overline{c})\overline{p}+a|p|^2)2\sqrt{-\triangle(T)}\overline{c}(\overline{b}-c) \nonumber\\
=&2\mid b\mid^{2}\mid b-\overline{c}\mid^{2}\left((d_{0}-a_{0})\sqrt{-\triangle(T)}-\triangle(T)\right) \nonumber\\
&-2\sqrt{-\triangle(T)}((b-\overline{c})\overline{p}+a\mid b-\overline{c}\mid^{2})\overline{c}(\overline{b}-c). \label{3.9}
\end{align}
Notice that
\begin{align*}
&((b-\overline{c})\overline{p}+a\mid b-\overline{c}\mid^{2})\overline{c}(\overline{b}-c) \\
=&((b-\overline{c})(2s_0c-d\overline{b}-\overline{d}c)+a\mid b-\overline{c}\mid^{2})\overline{c}(\overline{b}-c) \\
=&(b-\overline{c})2s_0c\overline{c}(\overline{b}-c)-(b-\overline{c})d\overline{b}\overline{c}(\overline{b}-c) \\
 &-(b-\overline{c})\overline{d}c\overline{c}(\overline{b}-c)+(b-\overline{c})(\overline{b}-c)a\overline{c}(\overline{b}-c)\\
=&2s_0|c|^2\mid b-\overline{c}\mid^{2}-(b-\overline{c})c\bar{a}\bar{c}(\overline{b}-c) \\
&-|c|^2(b-\overline{c})\overline{d}(\overline{b}-c)+(b-\overline{c})(\overline{b}b\overline{d}-ca\overline{c})(\overline{b}-c)\\
=&2s_0|b|^2\mid b-\overline{c}\mid^{2}-2a_0|b|^2\mid b-\overline{c}\mid^{2} \\
=&|b|^2\mid b-\overline{c}\mid^{2}\left((d_0-a_0)+\sqrt{-\triangle(T)}\right).
\end{align*}
Then, together with (\ref{3.9}), we obtain
\[
((b-\overline{c})\overline{p}+a|p|^2)(p'(\overline{b}-c)+\overline{a}|p'|^2)-|c|^2 \mid b-\overline{c}\mid^{4}=0.
\]
This proves the Claim C.

{\bf (I) $|x_{1}|=|x_{4}|$.}

According to our selection of $x_{n},\ n=1,2,3,4$ ($|x_1|^2-|x_2|^2=1$, $|x_4|^2-|x_3|^2=1$ and equations (\ref{x_{2}=}) and (\ref{x_{4}=})), it suffices to prove
\[
|((b-\overline{c})p^{-1}+a)c^{-1}|\cdot|((b-\overline{c})(p')^{-1}+a)c^{-1}|=1,
\]
which is a corollary of Claim C.

{\bf (II) $x_{1}\overline{x_{3}}=x_{2}\overline{x_{4}}$.}

According to equations (\ref{x_{2}=}) and (\ref{x_{4}=}), it follows from Claim C that
\[
x_{2}\overline{x_{4}}=x_1\cdot(b-\overline{c})p^{-1}+a)c^{-1}\cdot\overline{((b-\overline{c})(p')^{-1}+a)c^{-1}}\cdot\overline{x_3}=x_{1}\overline{x_{3}}.
\]

{\bf (III) $\overline{x_{1}}x_{2}=\overline{x_{3}}x_{4}$.}

According to equations (\ref{x_{2}=}) and (\ref{x_{4}=}), one can see
\[
\overline{x_{1}}x_{2}=-|x_1|^2\cdot((b-\overline{c})p^{-1}+a)c^{-1}
\]
and
\[
\overline{x_{3}}x_{4}=-|x_4|^2\cdot\left(((b-\overline{c})(p')^{-1}+a)c^{-1}\right)^{-1}.
\]
Then, by Claim C and {\bf (I)} $|x_{1}|=|x_{4}|$, we have
\[
\overline{x_{1}}x_{2}=\overline{x_{3}}x_{4}.
\]

The proof for the case of $a_0<d_0$ is similar to the case of $a_0>d_0$.
\end{proof}

\begin{example}\label{duijiaohua}
Let $T=\begin{bmatrix}
2+\mathbf{i} & -\sqrt{2}+\sqrt{2}\mathbf{i} \\
-\sqrt{2}+\sqrt{2}\mathbf{i} & -1-2\mathbf{i}
\end{bmatrix}$.   Then $\triangle(T)=-1<0$ and $\sigma_{R}(T)=\{ s\in\mathbb{H}:s_{0}=0\ or\ 1,\ \mid s\mid=1\}$. Choose
$X=\begin{bmatrix}
\sqrt{2} & \mathbf{i} \\
-1 & -\sqrt{2}\mathbf{i}
\end{bmatrix}$. Then, one can see that
\[XTX^{-1}=\begin{bmatrix}
1 & 0 \\
0 & -\mathbf{i}
\end{bmatrix}.
\]
\end{example}

\begin{remark}
In the proof of the Case $3$ of Theorem \ref{M1}, we show that if $a_0>d_0$, $T$ is $U(1,1;\mathbb{H})$-similar to $\begin{bmatrix}
s & 0 \\
0 & s'
\end{bmatrix}$ in $U(1,1;\mathbb{H})$, and if $a_0<d_0$, $T$ is $U(1,1;\mathbb{H})$-similar to $\begin{bmatrix}
s' & 0 \\
0 & s
\end{bmatrix}$ in $U(1,1;\mathbb{H})$. It is worthy to notice that the two cases are not $U(1,1;\mathbb{H})$-similar, that is, $D_1=\begin{bmatrix}
s & 0 \\
0 & s'
\end{bmatrix}$ is not $U(1,1;\mathbb{H})$-similar to $D_2=\begin{bmatrix}
s' & 0 \\
0 & s
\end{bmatrix}$ in $U(1,1;\mathbb{H})$. Let $X=\begin{bmatrix}
x_1 & x_2 \\
x_3 & x_4
\end{bmatrix}$. If $XD_1=D_2X$, i.e,
\begin{equation}\label{end}
\begin{bmatrix}
x_1s & x_2s' \\
x_3s & x_4s'
\end{bmatrix}=\begin{bmatrix}
s'x_1 & s'x_2 \\
sx_3 & sx_4
\end{bmatrix}.
\end{equation}
Since $Re(s)\neq Re(s')$, $x_1s=s'x_1$ has only one solution $x_1=0$. Then the equation (\ref{end}) has no solution in $U(1,1;\mathbb{H})$. Therefore, $D_1$ is not $U(1,1;\mathbb{H})$-similar to $D_2$ in $U(1,1;\mathbb{H})$.
\end{remark}


\begin{thebibliography}{000}

\bibitem{Adl94}
S. L. Adler, Quaternionic Quantum Mechanics and Quantum Fields. Oxford U.P., New York, 1994.

\bibitem{Ahlfors85}
L. V. Ahlfors, M\"{o}bius transformations and Clifford numbers. Differential Geometry and Complex Analysis, H. E. Rauch memorial volume, Springer-Verlag, Berlin, (1985): 65-73.


\bibitem{Ahlfors85F}
L. V. Ahlfors, On the fixed points of M\"{o}bius transformation in $\mathbb{R}^n$. Ann. Acad. Sci. Fenn. Ser. A. I. Math., 10 (1985): 15-27.


\bibitem{BC09}
C. Bisi, G. Gentili, M\"{o}bius transformations and the Poincar\'{e} distance in the quaternionic setting. Indiana Univ. Math. J., 58(6) (2009): 2729-2764.

\bibitem{JLB51}
J. L. Brenner, Matrix of quaternions. Pacific J. Math., 1(3) (1951): 329-335.

\bibitem{CW98}
C. Cao, P. L. Waterman, Conjugacy invariants of M\"{o}bius groups. Quasiconformal Mappings and Analysis, Springer, New York, (1998): 109-139.


\bibitem{WJX04}
W. Cao, J. R. Parker, X. Wang, On the classification of quaternionic M\"{o}bius transformations. Math. Proc. Camb. Phil. Soc., 137(2) (2004): 349-361.

\bibitem{CW07}
W. Cao, On the classification of four-dimensional M\"{o}bius transformations. Proc. Edinb. Math. Soc., 50(2) (2007): 49-62.

\bibitem{CWS2016}
W. Cao, Balls in quaternionic hyperbolic manifolds. Kodai Math. J., 39(2016): 439-454.

\bibitem{NFC}
F. Colombo, I. Sabadini, D. Struppa, Noncommutative Functional Calculus. Theory and applications of slice hyperholomorphic functions. Progress in Mathematics, 289. Birkh\"{a}user/Springer Basel AG, Basel, 2011.

\bibitem{DB2008}
D. Dokovi\'{c}, B. Smith, Quarternionic  matrices: Unitary similarity, simultaneous triangularization and some trace identities. Linear Algebra Appl., 428 (2008): 890-910.


\bibitem{Fin62}
D. Finkelstein, J. M. Jauch, S. Schiminovich, D. Speiser, Foundations of quatemion quantum mechanics, J. Math. Phys., 3 (1962): 207-220.

\bibitem{Fin79}
D. Finkelstein, J. M. Jauch, D. Speiser, Notes on quaternion quantum mechanics. I. The logico-algebraic approach to quantum mechanics, Vol. II, pp. 367-421, Univ. Western Ontario Ser. Philos. Sci., 5, Reidel, Dordrecht-Boston, Mass., 1979.


\bibitem{G01}
J. Gro{\ss}, G. Trenkler, S. Troschke, Quaternions: further contributions to a matrix oriented approach. Linear Algebra Appl., 326, (2001): 205-213.


\bibitem{HT}
B. Hou, G. Tian,  Geometry and operator theory on quaternionic Hilbert spaces, Ann. Funct. Anal., 6(4) (2015): 226-246.


\bibitem{IKJRP2003}
I. Kim, J. R. Parker, Geometry of quaternionic hyoerbolic manifolds. Math. Proc. Camb. Phil. Soc., 135(2) (2003): 291-320.

\bibitem{Lee49}
H. C. Lee, Eigenvalues and canonical forms of matrices with quaternion coefficients, Proc. Royal Irish Academy, 52, Section A, 117 (1949): 253-260.

\bibitem{EAN1968}
E. A. Nordgren, Composition operators. Canad. J. Math., 20 (1968): 442-449.

\bibitem{Wie55}
N. A.Wiegmann, Some theorems on matrices with real quaternion elements. Canad. J. Math., 7 (1955), 191-201.

\bibitem{Wolf36}
L. A. Wolf, Similarity of matrices in which the elements are real quaternions, Bull. Amer. Math. Soc., 42 (1936): 737-743.

\bibitem{FZZ1997}
F. Zhang, Quaternions and matrices of quarternions. Linear Algebra Appl., 251 (1997): 21-57.


\bibitem{ZC2015}
J. Zhou, W. Cao, On the classification of quaternionic M\"{o}bius transformations. Journal of Wuyi University, 29(2) (2015): 1-5.


\end{thebibliography}
\end{document}